 \numberwithin{equation}{section}
\DeclareMathSymbol{\minus} {\mathord}{operators}{"2D} %
\theoremstyle{plain}
\newtheorem{theorem}{Theorem}[section]
\newtheorem{lem}[theorem]{Lemma}
\newtheorem{pro}[theorem]{Proposition}
\newtheorem{alg}[theorem]{Algorithm}
\theoremstyle{definition}
\newtheorem{df}[theorem]{Definition}
\newtheorem{remark}[theorem]{Remark}
\newtheorem{example}[theorem]{Example}
\def \R {\mathbb{R}}
\def \C {\mathbb{C}}
\def \H {\mathbb{H}}
\def \Z {\mathbb{Z}}
\def \D {\mathbb{D}}
\def \M {\mathcal M}
\def \CN {\mathbb N}
\def \CO {\mathcal O}
\def \CR {\mathcal R}
\def \CS {\mathcal S}
\def \CW {\mathcal W}
\def \rk {{\mbox rank}}
\begin{document}

\title[Twisted dimer model]{A twisted dimer model for knots}
\date{\today}

\author[M. Cohen]{Moshe Cohen}
\address{Department of Mathematics, Bar Ilan University,
Ramat Gan 52900, Israel}
\email{cohenm10@macs.ac.il}
\thanks {}

\author[O. T. Dasbach]{Oliver T. Dasbach}
\address{Department of Mathematics, Louisiana State University,
Baton Rouge, LA 70803, USA}
\email{kasten@math.lsu.edu}
\thanks {The second author was supported in part by NSF grants DMS-0806539 and DMS-0456275 (FRG). The first and third authors were partially supported by NSF VIGRE grant DMS 0739382. }
 
\author[H. M. Russell]{Heather M. Russell}
\address{Department of Mathematics, Louisiana State University,
Baton Rouge, LA 70803, USA}
\email{hrussell@math.lsu.edu}
\thanks {}

\begin{abstract}
We develop a dimer model for the Alexander polynomial of a knot. This recovers Kauffman's state sum model for the Alexander polynomial using the language of dimers. By providing some additional structure we are able to extend this model to give a state sum formula for the twisted Alexander polynomial of a knot depending on a representation of the knot group. 
\end {abstract}
 
\maketitle

\def\kbsm#1{\mathscr{S}_K(#1)}
\def\sgn#1;#2{\mathbb{S}_{#1,#2}} 
\def\mcg#1;#2{\Gamma_{#1,#2}} 
\def\fg#1;#2{\Pi_{#1,#2}}
\def\tb#1;#2{\mathscr{K}_{\frac{#1}{#2}}}
\def\periph{(\mathcal{\mu},\mathcal{\lambda})}
\def\ext#1{\mathscr{E}(\mathscr{#1})}

%% Paper Notation
\def \R {\mathbb{R}}
\def \C {\mathbb{C}}
\def \H {\mathbb{H}}
\def \Z {\mathbb{Z}}
\def \D {\mathbb{D}}
\def \M {\mathcal M}
\def \CN {\mathbb N}
\def \CO {\mathcal O}
\def \CR {\mathcal R}
\def \CS {\mathcal S}
\def \CW {\mathcal W}
\def \rk {{\mbox rank}}
\def \BRT {{Bollob\'as--Riordan--Tutte }}
\def \Vol {\mathrm{Vol}}
\def \tr {\mathrm{tr}}

\def\frametitle {}
\def\block {}

\section{Introduction}
A dimer is an edge in a bipartite graph, and a dimer covering is a perfect matching for that graph. The study of dimer coverings started in the 1960's with the work of Kasteleyn \cite{Kasteleyn:DimerModel} and Temperley-Fisher \cite{TemperleyFisher:Dimers} who used it as a tool for studying  statistical physics. 
Kasteleyn showed that the partition function on weighted bipartite planar graphs can be expressed as a determinant of a suitable matrix.
The last ten years have seen a resurgence of the study of dimers and the application of this theory to many other areas of mathematics. 

Our interest is in exploring the opposite direction. We have given matrices, and we want to find the corresponding dimer model that expresses the determinant of the matrix as the partition function on the graph.  Two well-known polynomial knot invariants, the classical Alexander polynomial and Xiao-Song Lin's twisted Alexander polynomial, are defined as determinants.
The goal of this current work is to use the language of dimers to find a combinatorial model for the Alexander polynomial and the twisted Alexander polynomial. 

Given a knot $K$ in $S^3$ and some generic diagram for the knot we construct an associated planar bipartite graph with one set of vertices corresponding to crossings and the other set corresponding to faces. Edges signify incidence between crossings and faces.  Using this graph along with a certain weighting of the edges we provide a state sum model for the Alexander polynomial in terms of dimer coverings. This model recovers Kauffman's  state sum model for the Alexander polynomial \cite{Kauffman:OnKnots}. 

Consider a representation $\rho$ of the fundamental group of the knot complement. Associated to this representation one defines the twisted Alexander polynomial $\Delta_{\rho, K}$ which is an invariant of the knot together with the representation $\rho$.  We extend our dimer model for the Alexander polynomial to provide a state sum model for the twisted Alexander polynomial. 

In Section \ref{Dimer} we review some basic definitions and theorems dealing with dimer coverings. In Section 3 we recall the definition of the Alexander polynomial and show how to translate it into a dimer model. We will see that it is equivalent to Kauffman's state sum model. Section 4 begins with the definition of the twisted Alexander polynomial and shows how the dimer model can be augmented to provide a twisted dimer model. We provide examples throughout.

\section{Dimer Background} \label{Dimer}
In this section we review some facts and results about dimer coverings of graphs. For the interested reader Kenyon provides an excellent introductory set of lectures on this subject  \cite{Kenyon:LecturesOnDimers}. We will also need to recall some results of Kasteleyn \cite{Kasteleyn:DimerModel} a good explanation of which can be found in Kuperberg's work \cite{Kuperberg:Symmetries, Kuperberg:KasteleynCokernels, Kuperberg:PermanentDeterminant}. 

Let $\Gamma = (V_1, V_2, E)$ be a 
bipartite graph with $V_1$ and $V_2$ the two vertex sets and $E$ the collection of edges in $\Gamma$ each of which has one endpoint in $V_1$ and one endpoint in $V_2$. 

\begin{df}
A {\bf dimer} is an edge in $E$. A {\bf dimer covering} is a subset $m$ of $E$ such that each vertex in $\Gamma$ is an endpoint of exactly one edge in $m$. In other words a dimer covering is a perfect matching on $\Gamma$. 
Let $\mathcal{M}$ be the set of all dimer coverings of  $\Gamma$; note that $\mathcal{M} = \emptyset$ whenever $|V_1| \neq |V_2|$.
\end{df}

%\begin{figure}[h]
%\includegraphics[width=1.3in]{wackmatch.eps}
%\caption{A dimer covering of a bipartite graph.} \label{dimerex}
%\end{figure}

Let $\mu: E \rightarrow \mathbb{C}[t]$ be a weighting of the graph $\Gamma$, and denote the weighted graph by $\Gamma_{\mu}$.   Then we consider the following partition function $Z(\Gamma_{\mu})$ which is of particular interest in statistical physics.

$$Z(\Gamma_{\mu})=\sum_{m \, \in \, \mathcal{M}} \prod_{e \, \in \, m} \mu(e)$$ 

\begin{df} 
Given a weighting $\mu$ of $\Gamma$ and an ordering of the vertex sets 
$$V_1 = \{ v_{1,1}, \ldots, v_{|V_1|, 1} \} \mbox{ and } V_2 = \{ v_{1, 2}, \ldots, v_{|V_2|,2}\}$$ construct the matrix $M(\Gamma_{\mu})$ of dimension $|V_1| \times |V_2|$ with entries specified by the weight function $\mu$ as follows: The $ij^{th}$ entry of $M(\Gamma_{\mu})$ is given by the sum of all weights $\mu$ assigned to edges between $v_{i,1}$ and $v_{j,2}$. We call this the {\bf weight matrix} for $\Gamma_{\mu}$. Figure \ref{kastmat} gives an example.
\end{df}

Let $\textup{Perm}(M(\Gamma_{\mu}))$ denote the permanent (or unsigned determinant) of $M(\Gamma_{\mu})$. 
Thus, we see that $Z(\Gamma_{\mu}) = \textup{Perm}(M(\Gamma_{\mu}))$. In the case that $|V_1| \neq |V_2|$ both values are 0. A natural question to ask is: under what conditions can the weighting $\mu$ be modified to get a new weighting $\mu'$ with the property that the partition function for $\Gamma_{\mu'}$ is the determinant of the weight matrix for $\Gamma_{\mu}$. In other words does there exist weighting $\mu': E \rightarrow \mathbb{C}[t]$ such that
$Z(\Gamma_{\mu'}) = \textup{Det}(M(\Gamma_{\mu}))?$ In the case that $\Gamma$ is planar Kasteleyn proves that such a modification is always possible. He accomplishes this by using what is now called a Kasteleyn weighting.

\begin{df}
Let $\Gamma$ be a bipartite plane graph, that is a bipartite graph together with a fixed embedding of that graph in the plane. A {\bf Kasteleyn weighting} $\epsilon: E \rightarrow \{ \pm 1\}$ is a choice of $\pm 1$ for each edge with the property that each bounded face with 0 mod 4 edges has an odd number of $-1$ assignments and each bounded face with 2 mod 4 edges has an even number of $-1$ assignments.
\end{df} 

\begin{pro}[Kasteleyn] \label{Kweight}
Every  bipartite plane graph $\Gamma$ has a Kasteleyn weighting. 
\end{pro}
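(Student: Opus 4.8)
The plan is to rephrase the existence of a Kasteleyn weighting as the solvability of a linear system over $\mathbb{F}_2$ and then to solve it explicitly. First I would pass to additive notation: write $\epsilon(e) = (-1)^{x_e}$ with $x_e \in \mathbb{F}_2$, so that the number of $-1$'s assigned around a bounded face $f$ is congruent modulo $2$ to $\sum_{e \in \partial f} x_e$. Since $\Gamma$ is bipartite, every cycle has even length, and in particular every bounded face has either $0$ or $2 \bmod 4$ edges; thus the Kasteleyn condition is meaningful on every bounded face. Setting $c_f = 1$ when $f$ has $0 \bmod 4$ edges and $c_f = 0$ when $f$ has $2 \bmod 4$ edges, a Kasteleyn weighting is precisely a solution of the system $\sum_{e \in \partial f} x_e = c_f$, with one equation per bounded face.

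It then suffices to show this system is solvable for the prescribed right-hand side $c$, and in fact I would show it is solvable for \emph{every} $c$. Assuming $\Gamma$ connected (the disconnected case is handled componentwise), the boundaries of the bounded faces form a basis of the cycle space of $\Gamma$, whose dimension is $E - V + 1$, which by Euler's formula equals the number of bounded faces. Hence the bounded-face/edge incidence matrix has full row rank over $\mathbb{F}_2$, and the system admits a solution. This gives a clean existence proof.

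To obtain the weighting constructively I would instead build the solution using a spanning tree. Choose a spanning tree $T$ of $\Gamma$; by tree--cotree duality the edges dual to the non-tree edges of $\Gamma$ form a spanning tree $T^*$ of the dual graph $\Gamma^*$, which I root at the unbounded face. Assign $\epsilon = +1$ to every edge of $T$. Each bounded face $f$ is a non-root vertex of $T^*$, and the edge $e_f$ of $\Gamma$ dual to the edge joining $f$ to its parent in $T^*$ sets up a bijection between bounded faces and non-tree edges. Processing faces from the leaves of $T^*$ toward the root, when I reach $f$ every edge of $\partial f$ except $e_f$ has already been assigned, so I choose $\epsilon(e_f) = \pm 1$ to force the parity around $f$ to match $c_f$.

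The main point to verify is that this face-by-face procedure is consistent: each free edge $e_f$ must be used to correct exactly one face, and fixing $f$ must never disturb a face already handled. This holds because every non-tree edge on $\partial f$ is dual to a $T^*$-edge incident to $f$, hence joins $f$ either to a child (processed earlier, already frozen) or to its parent (the free edge $e_f$). The leaves-to-root order guarantees all edges of $f$ other than $e_f$ are frozen when $f$ is treated, and since no condition is imposed on the unbounded root face there is no global closing-up constraint. I expect this bookkeeping to be the only delicate point, as the algebraic solvability is immediate once the incidence matrix is seen to have full row rank.
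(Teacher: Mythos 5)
Your proposal is correct, and it really contains two arguments. The constructive half is essentially the paper's own proof: choose a spanning tree $T$ of $\Gamma$, note that the edges dual to the non-tree edges form a spanning tree $T^*$ of the dual graph, root $T^*$ at the unbounded face, assign tree edges arbitrarily (the paper) or $+1$ (you), and then force the value on each remaining edge by processing the bounded faces from the leaves of $T^*$ toward the root; your bookkeeping that each non-tree edge on $\partial f$ goes to a child (already frozen) or to the parent (the unique free edge $e_f$) is exactly what justifies the paper's leaf-pruning step, which the paper leaves implicit. What you add beyond the paper is the first half: recasting the Kasteleyn condition as a linear system $\sum_{e \in \partial f} x_e = c_f$ over $\mathbb{F}_2$ and observing that it is solvable for \emph{every} right-hand side, because the boundaries of the bounded faces of a connected plane graph form a basis of the cycle space, whose dimension $E - V + 1$ equals the number of bounded faces by Euler's formula. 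That argument is shorter and shows the specific parities $c_f$ are irrelevant to existence, but it is non-constructive; the tree-pruning version buys an explicit algorithm, which is what the paper actually reuses later in step (T5) of the twisted dimer model. One caveat shared by both treatments: the identification of face boundaries with cycles (and hence the cleanliness of both the $\mathbb{F}_2$ rank argument and the counting of $-1$'s around a face) implicitly assumes face boundaries do not repeat edges, i.e.\ no bridges; this degenerate case is glossed over in the paper as well and is harmless for the graphs arising from knot diagrams.
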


\begin{proof}
We can prove this fact by providing an algorithm for finding a Kasteleyn weighting. Begin by choosing a spanning tree $T = (V_1, V_2, E_T)\subset \Gamma$. If $\bar{\Gamma}$ is the dual plane graph of $\Gamma$ (with vertices given by faces of $\Gamma$, edges transverse to edges of $\Gamma$, and faces given by vertices of $\Gamma$) then there is an associated spanning tree $\bar{T}\subset \bar{\Gamma}$ which is disjoint from $T$. Consider $\bar{T}$ to be rooted at the vertex corresponding to the unbounded face.

Let $\epsilon: E_T \rightarrow \{\pm 1\}$ be arbitrarily given. Choose a valence 1 vertex $v_1$ of $\bar{T}$ that is not the root vertex. This vertex represents a face of $\Gamma$ that has all but one bounding edge present in the tree $T$. Let $e_1$ be the edge that is missing in that face of $\Gamma$. Since all other edges bounding the face have been assigned weights, the choice of value for $\epsilon(e_1)$ that will satisfy the properties of a Kasteleyn weighting is forced. Remove the vertex $v$ and the edge incident on $v$ from $\bar{T}$, and define $\epsilon(e_1)$ as necessary.

Repeat this process, pruning the non-root valence 1 vertices and their edges as you go. Eventually a single edge connecting a vertex $v$ to the root is all that remains in $\bar{T}$. This represents a single edge $e$ in $\Gamma$ shared by a bounded face and the unbounded face. Assign the necessary value to $\epsilon(e)$ in order to complete to a Kasteleyn weighting.
\end{proof}
 
\begin{df}
Let $\Gamma_{\mu}$ be a weighted bipartite plane graph. We call the weight matrix $M(\Gamma_{\epsilon \cdot \mu})$ the {\bf Kasteleyn matrix}, and we give it the special notation $K(\Gamma_{\mu}).$ See Figure \ref{kastmat} for a calculation of a Kasteleyn matrix.
\end{df}

\begin{figure}[ht]
\scalebox{.75}{\begin{picture}(60,90)(-70,-40)
\put(-110,20){$\Gamma_{\mu} = \hspace{10pt} \raisebox{-30pt}{\includegraphics[width=1.5in]{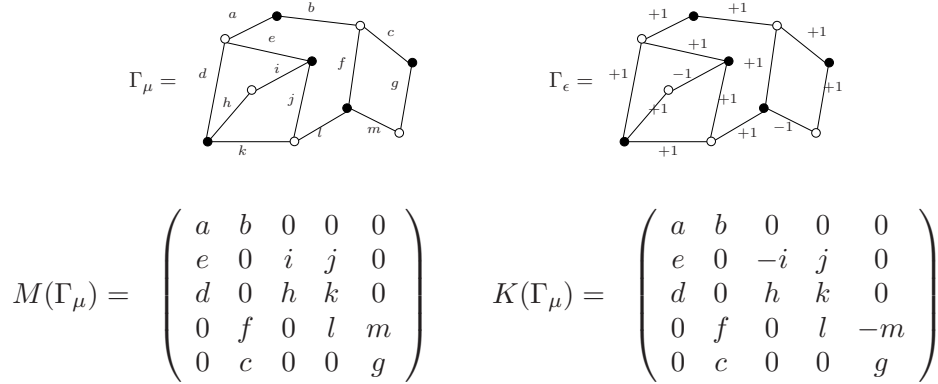}}$}
\put(-60,55){\tiny{$a$}}
\put(-20,58){\tiny{$b$}}
\put(20,46){\tiny{$c$}}
\put(-75,25){\tiny{$d$}}
\put(-40,43){\tiny{$e$}}
\put(-5,30){\tiny{$f$}}
\put(22,20){\tiny{$g$}}
\put(-63,10){\tiny{$h$}}
\put(-37,27){\tiny{$i$}}
\put(-30,12){\tiny{$j$}}
\put(-55,-14){\tiny{$k$}}
\put(-15,-5){\tiny{$l$}}
\put(10,-3){\tiny{$m$}}
\end{picture}
\hspace{2in}
\begin{picture}(60,90)(-70,-40)
\put(-110,20){$\Gamma_{\epsilon} = \hspace{10pt} \raisebox{-30pt}{\includegraphics[width=1.5in]{wackweight.eps}}$}
\put(-60,55){\tiny{$+1$}}
\put(-20,58){\tiny{$+1$}}
\put(20,46){\tiny{$+1$}}
\put(-80,25){\tiny{$+1$}}
\put(-40,40){\tiny{$+1$}}
\put(-12,30){\tiny{$+1$}}
\put(28,18){\tiny{$+1$}}
\put(-60,7){\tiny{$+1$}}
\put(-48,25){\tiny{$-1$}}
\put(-25,12){\tiny{$+1$}}
\put(-55,-14){\tiny{$+1$}}
\put(-15,-5){\tiny{$+1$}}
\put(3,-2){\tiny{$-1$}}
\end{picture}}

$M(\Gamma_{\mu}) = \hspace{.1in}
\left(\begin{array}{ccccc}
a&b&0&0&0\\
e& 0&i&j&0\\
d&0&h&k&0\\
0&f&0&l&m\\
0&c&0&0&g\\
\end{array}\right) \hspace{.3in}
K(\Gamma_{\mu}) = \hspace{.1in}
\left(\begin{array}{ccccc}
a&b&0&0&0\\
e& 0&-i&j&0\\
d&0&h&k&0\\
0&f&0&l&-m\\
0&c&0&0&g\\
\end{array}\right)$
\caption{A weighted bipartite graph, a Kasteleyn weighting, the weight matrix $M(\Gamma_{\mu})$ and the Kasteleyn matrix $K(\Gamma_{\mu})$ \label{kastmat}}
\end{figure}

A proof of the following result, which is due to Kasteleyn, can be found in \cite{Kuperberg:PermanentDeterminant}. 

\begin{theorem}[Kasteleyn] \label{KasteleynsTheorem}
Let  $\Gamma_{\mu}$ be a weighted bipartite plane graph. Then $$Z(\Gamma_{\epsilon \cdot \mu}) = \textup{Perm}(K(\Gamma_{\mu})) =  \pm \textup{Det}(M(\Gamma_{\mu})),$$ or equivalently
$$Z(\Gamma_{\mu}) = \textup{Perm}(M(\Gamma_{\mu})) =  \pm \textup{Det}(K(\Gamma_{\mu}))$$
\end{theorem}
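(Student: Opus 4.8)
The plan is to reduce everything to a single sign computation. First I would observe that the two displayed chains are equivalent: replacing $\mu$ by $\epsilon\cdot\mu$ and using $\epsilon^{2}\equiv 1$ interchanges $M(\Gamma_{\mu})$ with $K(\Gamma_{\mu})$, so it suffices to prove one of them, say $Z(\Gamma_{\mu})=\textup{Perm}(M(\Gamma_{\mu}))=\pm\,\textup{Det}(K(\Gamma_{\mu}))$. The first equality is the permanent–partition-function identity already recorded before the theorem: expanding $\textup{Perm}(M(\Gamma_{\mu}))=\sum_{\sigma}\prod_{i}M(\Gamma_{\mu})_{i\sigma(i)}$ and distributing the entrywise sums over parallel edges, the surviving monomials are exactly the products $\prod_{e\in m}\mu(e)$ over perfect matchings $m\in\mathcal{M}$. (The case $|V_{1}|\neq|V_{2}|$ is vacuous, as noted above, so I assume $|V_{1}|=|V_{2}|=n$.) Each matching $m$ determines a permutation $\sigma_{m}\in S_{n}$ through the bijection it induces between the ordered sets $V_{1}$ and $V_{2}$.

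The real content is the sign. Expanding the determinant gives $\textup{Det}(K(\Gamma_{\mu}))=\sum_{m\in\mathcal{M}}\mathrm{sgn}(\sigma_{m})\big(\prod_{e\in m}\epsilon(e)\big)\prod_{e\in m}\mu(e)$, so it is enough to show that $\mathrm{sgn}(\sigma_{m})\prod_{e\in m}\epsilon(e)$ takes a single value $c\in\{\pm1\}$ independent of $m$; then $\textup{Det}(K(\Gamma_{\mu}))=c\,Z(\Gamma_{\mu})$. I would establish this by comparing any two matchings $m_{1},m_{2}$ through their symmetric difference $m_{1}\triangle m_{2}$, which is a disjoint union of simple cycles $C_{1},\dots,C_{k}$, each of even length $2\ell_{j}$ alternating between edges of $m_{1}$ and of $m_{2}$. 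Tracking the induced bijections shows that $\sigma_{m_{1}}$ and $\sigma_{m_{2}}$ differ by a product of cycles, one $\ell_{j}$-cycle for each $C_{j}$, so $\mathrm{sgn}(\sigma_{m_{2}})/\mathrm{sgn}(\sigma_{m_{1}})=\prod_{j}(-1)^{\ell_{j}-1}$, while the edge-sign ratio is $\prod_{e\in m_{1}\triangle m_{2}}\epsilon(e)=\prod_{j}\prod_{e\in C_{j}}\epsilon(e)$.

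The crux is a sign lemma, which is where planarity and the Kasteleyn condition enter: for any simple cycle $C$ of length $2\ell$ bounding a disk that contains $V$ vertices of $\Gamma$ in its interior, $\prod_{e\in C}\epsilon(e)=(-1)^{\ell-1+V}$. I would prove this by induction on the number of faces enclosed by $C$. The base case of a single face is precisely the defining property of a Kasteleyn weighting (a face with $0\bmod 4$ edges, i.e.\ $\ell$ even, carries an odd number of $-1$'s, matching $(-1)^{\ell-1}=-1$, and similarly in the $2\bmod 4$ case). For the inductive step I would cut the disk along an interior path $P$ with $p$ edges joining two boundary vertices of $C$, splitting $C$ into two shorter cycles $C'$ and $C''$; since $\epsilon^{2}\equiv 1$ the shared edges of $P$ cancel, and a bookkeeping of lengths ($\ell=\ell'+\ell''-p$) and interior vertices ($V=V'+V''+(p-1)$) lets the two induction hypotheses combine to the claimed formula.

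Finally I would assemble the pieces. For each symmetric-difference cycle $C_{j}$ the contribution to the sign ratio is $(-1)^{\ell_{j}-1}\cdot(-1)^{\ell_{j}-1+V_{j}}=(-1)^{V_{j}}$, and the key observation is that each $V_{j}$ is even: a vertex enclosed by $C_{j}$ is matched by $m_{1}$ to a partner that can lie neither on $C_{j}$ (those vertices are already matched along the cycle) nor outside it (the connecting edge would have to cross the plane cycle $C_{j}$), so interior vertices are matched among themselves and come in pairs. Hence every cycle contributes $+1$, giving $\mathrm{sgn}(\sigma_{m_{1}})\prod_{e\in m_{1}}\epsilon(e)=\mathrm{sgn}(\sigma_{m_{2}})\prod_{e\in m_{2}}\epsilon(e)$ and the desired constant $c$. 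The main obstacle is the sign lemma together with the planarity input in this last step; the permanent identity and the cycle/permutation bookkeeping are routine once the signs are organized this way.
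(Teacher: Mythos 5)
Your proposal is correct, but there is nothing in the paper to compare it against line-by-line: the paper does not prove this theorem at all, it quotes it and refers the reader to Kuperberg \cite{Kuperberg:PermanentDeterminant} for a proof. What you have written is essentially the standard argument from that literature: the permanent--partition-function identity, reduction of the determinant statement to constancy of $\mathrm{sgn}(\sigma_m)\prod_{e\in m}\epsilon(e)$ over matchings, decomposition of $m_1\triangle m_2$ into alternating cycles, the sign lemma $\prod_{e\in C}\epsilon(e)=(-1)^{\ell-1+V}$ for a cycle of length $2\ell$ enclosing $V$ vertices, and the parity observation that planarity forces enclosed vertices to be matched among themselves, so each $V_j$ is even. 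Your bookkeeping in the inductive step ($\ell=\ell'+\ell''-p$, $V=V'+V''+p-1$, shared edges cancelling since $\epsilon^2\equiv 1$) checks out. Two points deserve explicit care. First, your induction needs the cutting path $P$ to exist whenever $C$ encloses more than one face; this requires that the subgraph of $\Gamma$ inside the closed disk be connected to $C$, which does hold for a connected plane graph (any path from an interior vertex to the rest of $\Gamma$ must pass through a vertex of $C$), but you should say so. Alternatively, you can bypass the induction entirely: multiply the defining face relations over all faces enclosed by $C$; interior edges cancel in pairs, and Euler's formula for the disk, $V-E_{\mathrm{int}}+F=1$, converts $\prod_f(-1)^{\ell_f-1}$ into $(-1)^{\ell-1+V}$ in one step. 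Second, both your base case and the paper's definition of a Kasteleyn weighting implicitly assume each bounded face is bounded by a simple cycle; that standing assumption should be stated. Neither point is a substantive gap --- your proof is sound and supplies exactly the argument the paper outsources.
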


\begin{remark}
It is known that Kasteleyn's theorem does not hold in general for non planar graphs. In fact, it holds if and only if the graph does not have $K_{3,3}$ as a minor \cite{LovaszPlummer:MatchingTheory}.
\end{remark}
 
\section{The Alexander polynomial}

We begin by giving a determinant definition of the Alexander polynomial due to Fox which can be found in \cite{CrowellFox:KnotTheory}. By using Kasteleyn's theorem we will construct a bipartite plane graph such that the partition function of this graph is the Alexander polynomial.
Finally, we will show that this approach yields Kauffman's state sum model \cite{Kauffman:OnKnots}.  

While it might initially seem strange to express a determinant by a partition function this combinatorial model proved to be useful for example in the study of Ozsv\'ath-Szab\'o-Knot-Floer homology theory, see e.g. \cite{OzsvathSzabo:Alternating,Lowrance:KnotFloerWidth} and compare with \cite{DasbachLowrance:KnotSignature} .

Moreover, some properties of the Alexander polynomial follow directly from this approach, e.g. it is an easy exercise to show that the Alexander polynomial of an alternating knot has coefficients of alternating signs.

\subsection{The Alexander polynomial as a determinant}\label{Adet}

Consider a knot $K\subset S^3$ along with some fixed generic diagram $D_K$. Label the faces of $D_K$ with $a_0, \ldots, a_m$ where $a_0$ is the unbounded face. Choose some base point above the plane of projection. Let $A_0, \ldots, A_m$ be loops in $\pi :=\pi_1(S^3-K)$ given by passing through face $a_i$ and returning through $a_0$ to the base point. Thus, loop $A_0$ is trivial in $\pi$. Using this notation we have the  Dehn presentation for the knot group $$ \pi= \left< A_0, \ldots, A_m| r_1, \ldots r_{m-1}, A_0 \right>$$ where the $r_i$ are relations coming from the crossings in $D_K$. In particular the Dehn relation for a crossing shown in Figure \ref{Dehncross} is $r: A_4A_2^{-1} = A_3A_1^{-1}$.
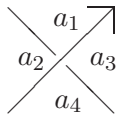
\begin{figure}[ht]
\begin{picture}(40,40)(0,-40)
\put(40,0){\line(-1,-1){40}}
\put(0,0){\line(1,-1){18}}
\put(40,-40){\line(-1,1){18}}
%\put(0,0){\line(1,0){10}}
%\put(0,0){\line(0,-1){10}}
\put(40,0){\line(-1,0){10}}
\put(40,0){\line(0,-1){10}}
\put(17,-7){$a_1$}
\put(0,-22){ $a_2$}
\put(31,-22){$a_3$}
\put(14,-38){ $a_4$}
\end{picture}
\caption{A crossing with labeled faces.} \label{Dehncross}
\end{figure}

Setting $A_0 = 1$ and incorporating that change in the relations $r_i$ we get a presentation of $\pi$ with $m$ generators and $m-1$ relations $$\pi = <A_1, \ldots, A_m: r_1, \ldots, r_{m-1}>.$$

Let $\mathcal{F}_m$ be the free group on $m$ generators $A_1, \ldots, A_m$. Then the  free derivative$\frac{\partial}{\partial A_i}$ is a map from $\mathcal{F}_m$  to $\mathbb{Z}[\mathcal{F}_m]$ recursively defined by
\begin{itemize}
\item{$\frac{\partial}{\partial A_i}\left(1\right) = 0$,}
\item{$\frac{\partial}{\partial A_i}\left(A_j\right) = \delta_{ij}$,}
\item{$\frac{\partial}{\partial A_i}\left(-A_j\right) = -\delta_{ij}A_j^{-1}$,}
\item{and $\frac{\partial}{\partial A_i}\left(wA_j\right) = \frac{\partial}{\partial A_i}\left(w\right) + w\frac{\partial}{\partial A_i} \left( A_j\right)$ for any word $w\in \mathcal{F}_m$.}
\end{itemize}

Consider the map $\phi: \mathcal{F}_m \rightarrow \pi$ defined by the map $\phi(A_i) = A_i$. We can extend this to a map $\phi: \mathbb{Z}[\mathcal{F}_m] \rightarrow \mathbb{Z}[\pi]$, and the kernel of this map will be generated by the relations $r_i$ in $\mathcal{F}_m$.  Let $\psi: \mathbb{Z}[\pi] \rightarrow \mathbb{Z}[t^{\pm 1}]$ be the abelianization mapping which will take meridians positively linking the knot to the variable $t$.

Let $M_K$ be the $(m-1)\times m$ dimensional matrix with $ij^{th}$ entry given by $ \psi \circ \phi\left( \frac{\partial r_i}{\partial A_j} \right)$. Remove any column corresponding to a face of the diagram $D_K$ that is adjacent to the unbounded face obtaining a square matrix $M_K'$. Up to sign and multiplication by a power of $t^{\pm 1}$, the determinant of $M_K'$ is independent of the choice of the adjacent face and it is invariant under Reidemeister moves.  Thus we define the Alexander polynomial $K$, denoted $\Delta_K(t)$, to be
$$\Delta_K(t) \dot{=} \textup{det}(M_K'),$$
where $\dot{=}$ means equality up to multiplication with $\pm t^{k}$ for some power $k$.

It will turn out that the entries of the matrix $M_K$ are all either  0 or $\pm 1$ or $\pm t$.
More specifically,  the free derivatives of all the relations $r_i$ will end up being $0$ or $1$ or meridians in $\pi$. In other words this means that while we form the matrix $M_K$ using the Dehn presentation for the knot group, the free derivatives of the relations are up to a sign the so called Wirtinger generators.  
Since it will become important in Section \ref{TwistedAlex} we recall the Wirtinger presentation for $\pi$ and then show the calculation of the free derivatives. 

Given the diagram $D_K$ we can label the arcs of the knot $c_1, \ldots, c_{\ell}$. Again choose some base point above the plane of projection. For $1\leq i\leq \ell$ let the loop $x_i$ be the meridian that leaves the base point, positively links the arc $c_i$, and returns to the base point. We again get a relation  $r'_{j}$ at each crossing. Using these generators and relations we get another presentation for $\pi$ known as the Wirtinger presentation.
$$\pi = <x_1, \ldots, x_{\ell}| r_1', \ldots, r_{\ell-1}'>$$
Under abelianization we see that $\psi(x_i) = t$ for all $i$. 
\begin{lem}\label{DW}
The free derivatives of Dehn relations are either $0$, $\pm 1$ or, up to a sign, Wirtinger generators.
Thus the matrix $M_K'$ has entries either $0$, $\pm 1$ or $\pm t$.
By multiplying suitable rows and colums with $-1$ we can assume that $M_K'$ has only non-negative entries $0$, $1$ or $t$.
\end{lem}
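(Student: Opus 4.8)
The plan is to reduce the statement to a single computation at a generic crossing and then to handle the global sign normalization via the checkerboard coloring of the diagram.

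First I would record the Dehn relator at the crossing of Figure~\ref{Dehncross}. The relation $A_4 A_2^{-1} = A_3 A_1^{-1}$ becomes the relator $r = A_4 A_2^{-1} A_1 A_3^{-1}$, a word involving only the four faces $a_1,a_2,a_3,a_4$ incident to the crossing; for every other face one has $\frac{\partial r}{\partial A_j}=0$. Applying the product rule for the free derivative to this four-letter word gives the four nonzero derivatives
$$\frac{\partial r}{\partial A_4}=1,\qquad \frac{\partial r}{\partial A_1}=A_4A_2^{-1},\qquad \frac{\partial r}{\partial A_2}=-A_4A_2^{-1},\qquad \frac{\partial r}{\partial A_3}=-A_4A_2^{-1}A_1A_3^{-1}=-r.$$
Thus every nonzero free derivative of $r$ is $\pm 1$, $\pm A_4A_2^{-1}$, or $\pm r$.

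Next I would pass through $\phi$ and $\psi$. The constant $1$ maps to $1$, and since $r$ is a relator we have $\phi(r)=1$, so the entry $-r$ becomes $-1$. The remaining two entries are $\pm A_4A_2^{-1}$, and here lies the geometric content: the loop $A_4A_2^{-1}$ descends through face $a_4$ and returns through the adjacent face $a_2$ (the two passages through the unbounded face $a_0$ cancel), so it encircles precisely the single arc separating $a_2$ and $a_4$. Hence $A_4A_2^{-1}$ is, up to orientation, the Wirtinger meridian of that arc, which proves the first assertion; with the standing orientation convention (that $\psi$ sends a positively linking meridian to $t$) it maps to $t$. Assembling the three cases shows every entry of $M_K$, and hence of the square submatrix $M_K'$, lies in $\{0,\pm 1,\pm t\}$, giving the second assertion.

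For the normalization I would exploit that the signs are governed by the checkerboard two-coloring of the complementary regions. In the relator $r=A_4A_2^{-1}A_1A_3^{-1}$ the faces alternate in color, with $a_4,a_1$ of one color (appearing as positive letters) and $a_2,a_3$ of the other (appearing as inverse letters); the free derivative of a positive letter carries sign $+$ and that of an inverse letter sign $-$. Consequently the sign of the $(i,j)$ entry factors as $\sigma_i\tau_j$, where $\tau_j=\pm 1$ records the checkerboard color of face $a_j$ (a global quantity) and $\sigma_i=\pm 1$ records which color appears positively in the relator $r_i$ (equivalently, whether one uses $r_i$ or $r_i^{-1}$). Multiplying each row $i$ by $\sigma_i$ and each column $j$ by $\tau_j$ then makes all entries nonnegative, i.e.\ in $\{0,1,t\}$; this alters $\det M_K'$ only by an overall sign, which is absorbed into $\dot{=}$.

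I expect the sign normalization to be the main obstacle, precisely because one must see that the local $+/-$ pattern at each crossing is the restriction of a globally consistent rule (the checkerboard coloring together with a per-crossing row sign) rather than an artifact of the labeling; checking that the pattern survives for the mirror crossing type is the point to verify. A secondary subtlety is the orientation bookkeeping that forces the meridian to contribute $t$ rather than $t^{-1}$.
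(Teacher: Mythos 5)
Your proof is correct and takes essentially the same route as the paper: the same free-derivative computation at the crossing of Figure~\ref{Dehncross}, the same identification of $A_4A_2^{-1}$ with the Wirtinger meridian of the overstrand, and the same checkerboard normalization (your sign factorization $\sigma_i\tau_j$ is exactly the paper's ``multiply rows so the negative entries sit at black faces, then negate the black columns''). The only cosmetic differences are that you compute $\frac{\partial r}{\partial A_3}=-r$ and then apply $\phi(r)=1$ where the paper writes $-1$ directly (implicitly working in $\mathbb{Z}[\pi]$), and that you explicitly flag the mirror-crossing and orientation checks that the paper likewise leaves implicit.
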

\begin{proof}
As we noted above the Dehn relation coming from this crossing in Figure \ref{Dehncross} has the form $ A_4A_2^{-1} = A_3A_1^{-1}$. Furthermore if we say that $x$ is the Wirtinger generator assigned to the overcrossing strand in Figure \ref{Dehncross} then we have that $A_4A_2^{-1} = x$.

Now we take the free derivatives of the relation $r$ with respect to each variable and see that the results can be written completely in terms of $x$ and $1$. Indeed, we get the following.
\begin{center}
\begin{tabular}{ll}
$\frac{\partial}{\partial A_1}(r) = A_4A_2^{-1} = x$& $ \hspace{.5in}\frac{\partial}{\partial A_2}(r) = -A_4A_2^{-1} = -x$\\
$\frac{\partial}{\partial A_3}(r) = -1$&$ \hspace{.5in}\frac{\partial}{\partial A_4} (r) = 1$\\
\end{tabular}
\end{center}

It remains to show that the matrix can be transformed into a matrix with only non-negative entries by multiplying suitable rows or columns by $-1$.
For that color the faces of the diagram black/white so that no two adjacent faces have the same color. We see that locally the partial derivatives are negative
at the two generators corresponding to either the black faces or the white faces. By multiplying all entries locally by $-1$, i.e. multiplying a row by $-1$. we can assume that the partial derivatives are negative at, say, the black faces.  By multiplying all columns corresponding to black faces we obtain the result.
\end{proof}

\subsection{The dimer state sum}

Using Kasteleyn's Theorem \ref{KasteleynsTheorem} we get the following construction which expresses the Alexander polynomial, given as a determinant, as a partition function of a certain bipartite graph.

For this we take a knot diagram and chose two adjacent faces that we disregard.
As in Lemma \ref{DW} the Alexander polynomial is the determinant of a matrix $M_K'$ that is indexed by the remaining faces and the relations, which correspond to the crossings of the diagram. For each relation (i.e. crossing) locally the entries in the matrix are described by the picture in Figure \ref{LocalWeights} as proven in Lemma \ref{DW}.

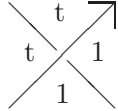
\begin{figure}[ht]
\scalebox{1}{\begin{picture}(40,40)(0,-40)
\put(40,0){\line(-1,-1){40}}
\put(0,0){\line(1,-1){18}}
\put(40,-40){\line(-1,1){18}}
%\put(0,0){\line(1,0){10}}
%\put(0,0){\line(0,-1){10}}
\put(40,0){\line(-1,0){10}}
\put(40,0){\line(0,-1){10}}
\put(17,-7){t}
\put(2,-22){ t}
\put(31,-22){1}
\put(14,-38){ 1}
\end{picture}}
 \caption{Local weights at a crossing \label{LocalWeights}}
\end{figure}

Thus we obtain the following:
  
\begin{alg}{{\bf The dimer state sum model}} \label{dimstat}
\begin{enumerate}
\item[(D1)]{Construct a bipartite plane graph $\Gamma = (V_1, V_2, E)$ as follows.
\begin{itemize}
\item{The vertex set $V_1$ is the set of crossings of the diagram.}
\item{The vertex set $V_2$ is the set of faces of the diagram.}
\item{Given vertices $x\in V_1$ and $y\in V_2$ the edge $(x,y)$ is in the set $E$ if and only if the crossing $x$ is incident on the face $y$.}
\end{itemize}
We will call this the Alexander graph.}
\item[(D2)]{Use the weighting system of \ref{LocalWeights} to define a weighting 
$\alpha: E \rightarrow \mathbb{C}[t]$ on $\Gamma$.}
\item[(D3)] Choose a Kasteleyn weighting. The next paragraph will describe a particular weighting due to Kauffman.
\item[(D3)]{Calculate the partition function $Z(\Gamma_{\alpha}) = \sum_{m\in \mathcal{M}} \prod_{e\in m} \alpha(e)$.}
\end{enumerate}
\end{alg}

\subsubsection{Kauffman's Kasteleyn weighting}

The following proposition describes a way to choose a Kasteleyn weighting which is due to Kauffman. The proof that it gives a Kasteleyn weighting is an easy exercise.

\begin{pro}
The assignments of weights given by Figure \ref{KauffmanTrick} is a Kasteleyn weighting,

\begin{figure}[ht]
\scalebox{1}{ 
\begin{picture}(40,40)(0,-40)
\put(40,0){\line(-1,-1){40}}
\put(0,0){\line(1,-1){40}}
\put(0,0){\line(1,0){10}}
\put(0,0){\line(0,-1){10}}
\put(40,0){\line(-1,0){10}}
\put(40,0){\line(0,-1){10}}
\put(15,-10){-1}
\put(-8,-22){ +1}
\put(24,-22){ +1}
\put(12,-36){+1}
\end{picture}
}
\caption{Kauffman's Kasteleyn weighting \label{KauffmanTrick}}
\end{figure}
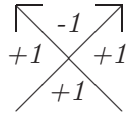

\end{pro}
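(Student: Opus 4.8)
The plan is to check the two defining properties of a Kasteleyn weighting against the face structure of the Alexander graph $\Gamma=(V_1,V_2,E)$ of Algorithm \ref{dimstat}. The first step is to identify the bounded faces of $\Gamma$. Each edge (arc) $e$ of the generic diagram $D_K$ runs between two crossings $c_1,c_2$ and separates two faces $f_1,f_2$ of $D_K$; in $\Gamma$ these four vertices are joined by the four edges $c_1f_1,\,c_1f_2,\,c_2f_1,\,c_2f_2$, which bound a quadrilateral straddling $e$. A count with Euler's formula shows that a diagram with $n$ crossings has $2n$ arcs and that $\Gamma$ (with $2n+2$ vertices and $4n$ edges) has exactly $2n$ faces, so the quadrilaterals just described account for all of the faces of $\Gamma$. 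In particular every bounded face of $\Gamma$ has exactly four edges, hence is $0 \bmod 4$, and there are no faces that are $2 \bmod 4$. So the only condition to verify is that each quadrilateral face carries an odd number of $-1$'s.

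Next I would record the local content of Figure \ref{KauffmanTrick}: at every crossing exactly one of the four incident edges is weighted $-1$ and the other three are weighted $+1$. Fix a quadrilateral face $Q_e$ coming from an arc $e$ with ends $c_1,c_2$ and sides $f_1,f_2$. Two of its edges, $c_1f_1$ and $c_1f_2$, meet at $c_1$, and the other two meet at $c_2$; moreover $f_1,f_2$ are precisely the two regions of $D_K$ adjacent to the strand $e$ at the crossing in question. Since each crossing emits a single $-1$ edge, the number of $-1$'s on $Q_e$ equals the number of crossings among $c_1,c_2$ at which the distinguished ($-1$) region lies in $\{f_1,f_2\}$, a number in $\{0,1,2\}$. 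The required oddness is therefore equivalent to the statement that the distinguished region of Figure \ref{KauffmanTrick} is adjacent to $e$ at exactly one of the two crossings $c_1,c_2$.

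This last reduction is the heart of the matter, and I expect it to be the main obstacle. The distinguished region at a crossing is adjacent to exactly two of the four arcs meeting there (the two arcs bounding that region), so at each end of $e$ the arc $e$ either is or is not one of these two; what must be shown is that traversing $e$ always switches this status. I would establish this by a short case analysis using the over/under (and knot orientation) data at the two ends of $e$, reading the convention of Figure \ref{KauffmanTrick} against the crossing of Figures \ref{Dehncross} and \ref{LocalWeights} to pin down which region is distinguished in terms of that data. Once the alternation along every arc is confirmed, every quadrilateral face of $\Gamma$ contains exactly one $-1$, hence an odd number, and the weighting of Figure \ref{KauffmanTrick} satisfies the definition of a Kasteleyn weighting.
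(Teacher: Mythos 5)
The paper itself gives no argument for this proposition (it dismisses the claim as ``an easy exercise''), so your proposal has to stand on its own. Its first two steps do stand: identifying the bounded faces of the Alexander graph with quadrilaterals straddling the edges of $D_K$, and reducing the Kasteleyn condition to the statement that the distinguished ($-1$) region is adjacent to each diagram edge at exactly one of its two endpoints, is the right skeleton. One imprecision: the graph of Algorithm \ref{dimstat} (see the paragraph preceding it, and the trefoil example) omits two adjacent faces of $D_K$, one of them unbounded, so it has $2n$ vertices rather than $2n+2$; your count describes the full crossing--face incidence graph. This is harmless, but it deserves a sentence: deleting those two face-vertices only merges the quadrilaterals incident to them into the unbounded face, so every bounded face of the algorithm's graph is still one of your quadrilaterals, and it suffices to check the parity condition on all of them. (You are also implicitly assuming the diagram is reduced, so that no face meets a crossing in two quadrants and no monogon regions occur; otherwise bigon faces of $\Gamma$ appear and the ``all faces are $0 \bmod 4$'' claim is false.)

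The genuine gap is exactly the step you flag as the heart: you never pin down which region carries the $-1$, and the proof lives or dies there. The resolution is that Kauffman's rule is an \emph{orientation} convention, not an over/under convention: the distinguished quadrant at a crossing is the one lying between the two \emph{outgoing} half-edges of the oriented diagram --- equivalently, rotate each crossing so that both strands point upward and place the $-1$ on the edge to the top region. (Consistently with this, Figure \ref{KauffmanTrick} draws the crossing with no over/under information at all; the over/under data in Figures \ref{Dehncross} and \ref{LocalWeights} is needed only for the $t$-weights.) With this reading your alternation statement is immediate and needs no case analysis: every edge of $D_K$ is outgoing at exactly one of its two endpoints, so every quadrilateral face contains exactly one $-1$. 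By contrast, the over/under-based readings your plan invites genuinely fail. For example, the rule ``mark the quadrant to the left of the outgoing overstrand'' agrees with the correct rule at positive crossings but at a negative crossing marks the quadrant bounded by the outgoing overstrand and the \emph{incoming} understrand; then an edge leaving one crossing as an overpass and arriving at a negative crossing as an underpass acquires two $-1$'s, and an edge leaving as an underpass and arriving as an overpass acquires none. For the left-handed trefoil this happens at every edge, so the Kasteleyn condition fails on every bounded face. Thus ``a short case analysis using the over/under data'' is not a routine finish; correctly identifying the convention is the entire content of the proposition, and your proposal leaves it open.
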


Now the following Theorem immediately follows by construction and Kasteleyn's theorem \ref{KasteleynsTheorem}.

\begin{pro}
The dimer state sum model described in Algorithm \ref{dimstat} calculates the Alexander polynomial. In other words for the weighted graph $\Gamma_{\alpha}$ we have $\Delta_L(t) \dot{=}  Z(\Gamma_{\alpha})$.
\end{pro}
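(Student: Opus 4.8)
The plan is to identify the weight matrix of the Alexander graph with the matrix $M_K'$ that defines the Alexander polynomial, and then to let Kasteleyn's theorem do the analytic work. First I would line up the two index sets. By definition $\Delta_K(t) \dot{=} \det(M_K')$, where the rows of $M_K'$ are indexed by the relations (equivalently, the crossings) and the columns by the faces that survive after the unbounded face $a_0$ and one adjacent face are disregarded. A diagram with $m-1$ crossings has $m-1$ relations, and an Euler characteristic count gives $m+1$ faces, so exactly $m-1$ faces remain after the two adjacent faces are removed; hence $M_K'$ is square of size $(m-1)\times(m-1)$. This is precisely the balance $|V_1| = |V_2|$ that the Alexander graph $\Gamma$ of Algorithm~\ref{dimstat} needs in order to admit dimer coverings, so the graph used for the partition function is the one built from the diagram with these two faces disregarded.

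The key step is to prove $M(\Gamma_\alpha) = M_K'$ entry by entry. On the graph side, the edge $(x,y)$ lies in $E$ exactly when the crossing $x$ is incident to the face $y$. On the algebra side, the free derivative $\partial r_i/\partial A_j$ is nonzero exactly when the generator $A_j$ (the face $a_j$) occurs in the relation $r_i$, which happens precisely for the four faces meeting that crossing. Thus the zero pattern of $M_K'$ coincides with the incidence pattern recorded by $E$. For the nonzero entries, Lemma~\ref{DW} evaluates the abelianized free derivatives at a crossing to be, after the row/column sign normalization described there, exactly the local values $t,t,1,1$ displayed in Figure~\ref{LocalWeights}; these are by construction the values of the weighting $\alpha$. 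Hence, after the sign changes of Lemma~\ref{DW}, the two matrices agree.

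With this dictionary established I would finish by applying Theorem~\ref{KasteleynsTheorem}. The Alexander graph is bipartite and planar, its two vertex classes being the crossings and the faces of a planar diagram joined by incidence edges, so Kasteleyn's theorem is available. Choosing Kauffman's Kasteleyn weighting $\epsilon$ of Figure~\ref{KauffmanTrick}, which is a genuine Kasteleyn weighting by the preceding proposition, the theorem yields $Z(\Gamma_{\epsilon\cdot\alpha}) = \textup{Perm}(K(\Gamma_\alpha)) = \pm\det(M(\Gamma_\alpha))$. Combining this with the previous step gives $Z(\Gamma_{\epsilon\cdot\alpha}) = \pm\det(M_K') \dot{=} \Delta_K(t)$, where both the sign and any factor $\pm t^{k}$ are absorbed by the equivalence $\dot{=}$.

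The main obstacle is the entry-by-entry identification $M(\Gamma_\alpha)=M_K'$: one must simultaneously verify that the supports agree (incidence of faces at a crossing versus non-vanishing of the free derivatives), that the surviving nonzero values are exactly those of Figure~\ref{LocalWeights}, and that disregarding precisely the two adjacent faces produces a square matrix of the correct size. Once this bookkeeping is in place the remaining content is carried entirely by Kasteleyn's theorem, and the flexible relation $\dot{=}$ frees the argument from tracking signs or powers of $t$.
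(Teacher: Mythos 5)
Your proposal is correct and follows essentially the same route as the paper, which proves this proposition simply by asserting that it "follows by construction and Kasteleyn's theorem"; your write-up just makes explicit the bookkeeping the paper leaves implicit (the square size count, the identification of $M(\Gamma_\alpha)$ with $M_K'$ via Lemma \ref{DW}, and the final application of Theorem \ref{KasteleynsTheorem} with Kauffman's weighting). The only cosmetic difference is that you correctly write the partition function as $Z(\Gamma_{\epsilon\cdot\alpha})$, whereas the paper's statement absorbs the Kasteleyn signs into the notation $\Gamma_\alpha$.
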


\begin{example}
Consider the trefoil given in Figure \ref{trefoilex}. 

\begin{figure}[ht]
\begin{picture}(100,70)(-20,0)
\put(0,0){\includegraphics[width=.8in]{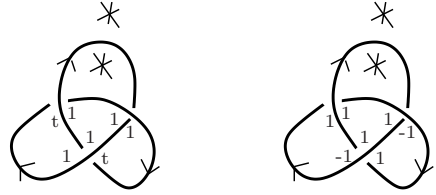}}
\put(16,24){\tiny{t}}
\put(22,27){\tiny{1}}
\put(35,10){\tiny{t}}
\put(20,11){\tiny{1}}
\put(29,18){\tiny{1}}
\put(38,25){\tiny{1}}
\put(44,20){\tiny{1}}
\end{picture}
\begin{picture}(100,70)(-20,0)
\put(0,0){\includegraphics[width=.8in]{FaceSelect.eps}}
\put(16,24){\tiny{1}}
\put(22,27){\tiny{1}}
\put(35,10){\tiny{1}}
\put(20,11){\tiny{-1}}
\put(29,18){\tiny{1}}
\put(38,25){\tiny{1}}
\put(44,20){\tiny{-1}}
\end{picture}

\caption{Local weights and Kauffman's Kasteleyn weights for the trefoil \label{trefoilex}}
\end{figure}

Associated to that diagram we have the following bipartite plane graph where the black vertices correspond to the faces and the white vertices to the crossings of the diagram.
This graph has three dimer coverings.

\begin{figure}[ht]
\raisebox{-20pt}{\scalebox{.8}{\begin{picture}(30,80)(-30,0)
\put(0,0){\includegraphics[width=1.5in]{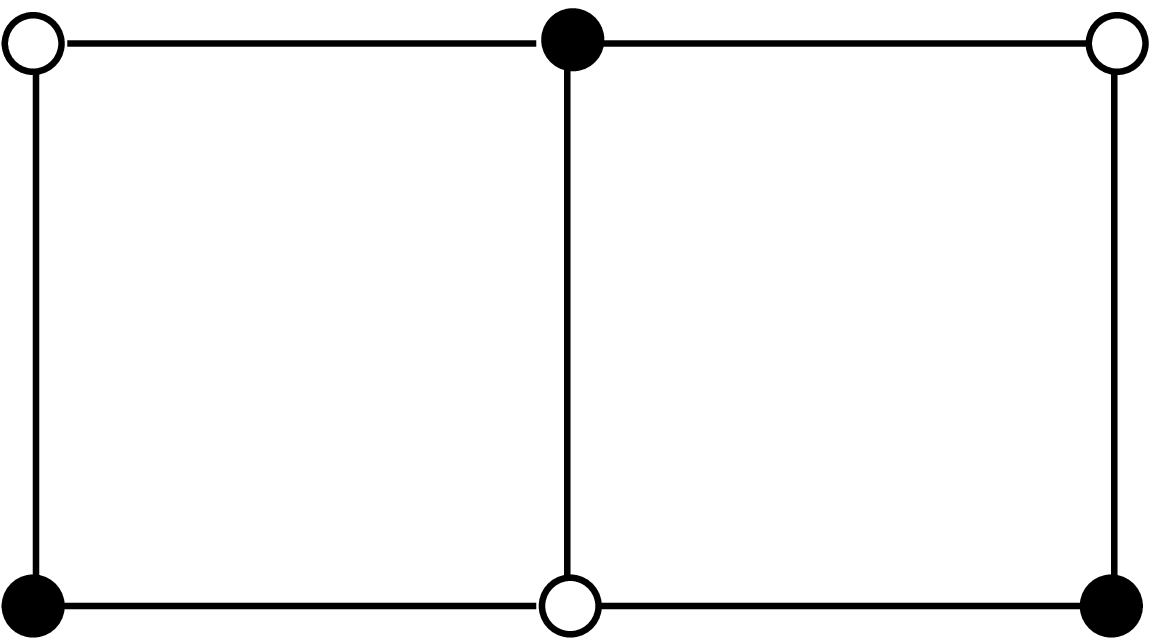}}
\put(-5,25){t}
\put(42,25){1}
\put(90,25){-1}
\put(25,60){1}
\put(75,60){1}
\put(25,-10){-1}
\put(75,-10){t}
\end{picture}}}
\hspace{2in} $\mathcal{M} = \left\{ \raisebox{-10pt}{\includegraphics[width=.7in]{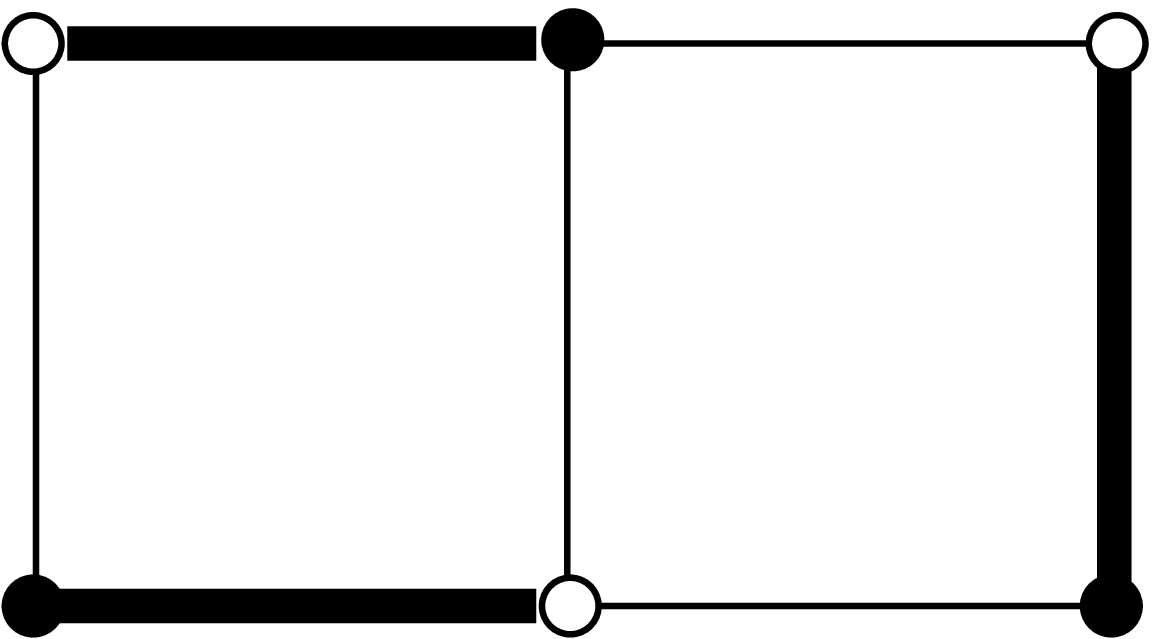}},\hspace{.3in}\raisebox{-10pt}{\includegraphics[width=.7in]{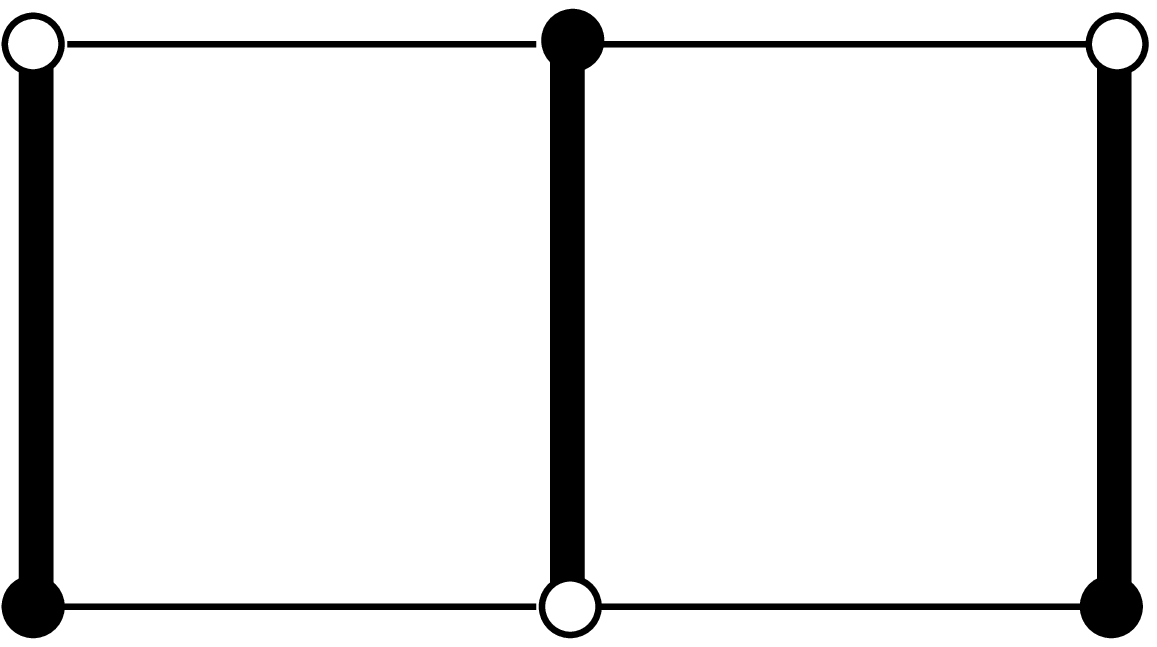}},\hspace{.3in}\raisebox{-10pt}{\includegraphics[width=.7in]{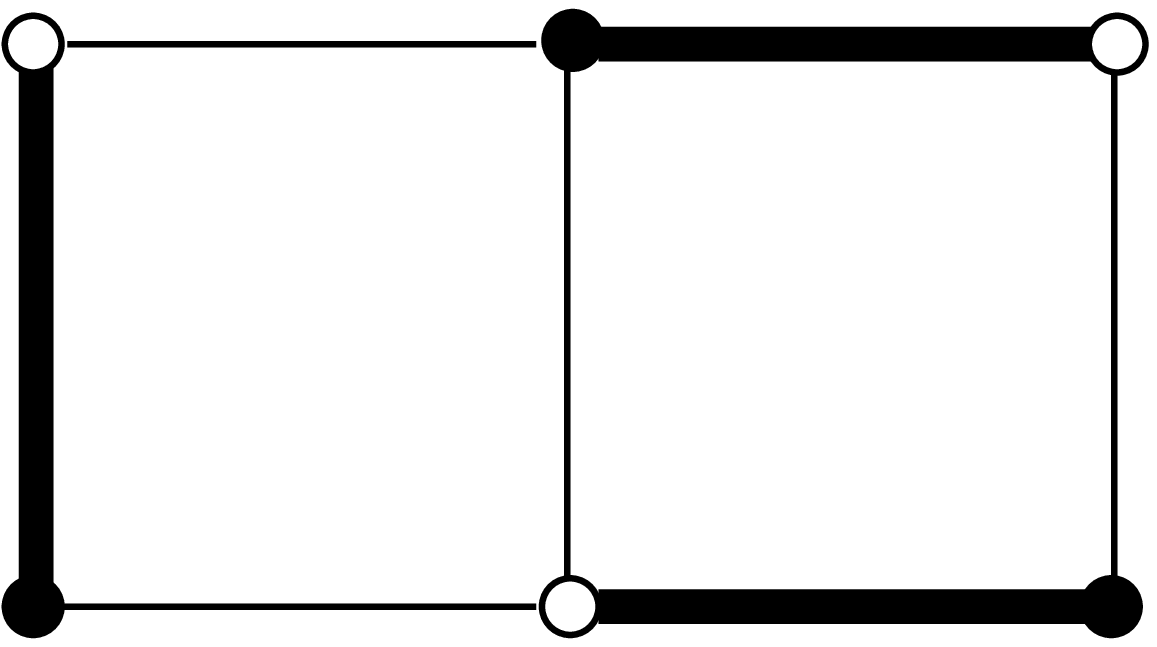}} \right\}$
\end{figure}

\noindent Then we calculate the Alexander polynomial using Algorithm \ref{dimstat} as follows. $$ Z(\Gamma_{\alpha}) = \alpha \left( \raisebox{-12pt}{\includegraphics[width=.8in]{pm3.eps}}\right) +  \alpha \left( \raisebox{-12pt}{\includegraphics[width=.8in]{pm2.eps}}\right) + \alpha \left( \raisebox{-12pt}{\includegraphics[width=.8in]{pm1.eps}}\right) = t^2 -t + 1$$
\end{example}

\subsection{Kauffman's state sum}
It turns out that with  the dimer state sum model described in the previous section we recover the state sum model by Kauffman for the Alexander polynomial.
We will briefly describe this model here. 

Again let $D_K$ be a generic diagram of a knot $K$. Choose the unbounded face of $D_K$ and one other face that is adjacent to the unbounded face. We will disregard these faces in our calculation. A simple Euler characteristic argument tells us that the number of crossings of the diagram is equal to the number of faces remaining.  Say this number is $m$.

\begin{alg}{{\bf Kauffman state sum model}}\label{KA}
%Begin with a diagram $D_K$ for a knot $K$ for which two faces have been disregarded in the manner described above.
\begin{enumerate}
\item[(K1)]Decorate the diagram $D_K$ with the product of the two weights around each crossing that are depicted in Figure \ref{LocalWeights} and Figure \ref{KauffmanTrick}.
\item[(K2)]{Find all possible ways to distribute $m$ markers on the diagram so that each remaining face and each crossing has exactly one marker. Each of these configurations is called a state. Denote the set of all states by $\mathcal{S}$.}
\item[(K3)]{For each $s\in\mathcal{S}$ let $w(s)$ be the product of the weights associated to the state.  Then $$\Delta_K(t) \dot{=} \sum_{s\in \mathcal{S}} w(s).$$} 
\end{enumerate}
\end{alg}

The weights given in the lefthand diagram in step $K1$ of Algorithm \ref{KA} are encoding the free derivatives described in the previous section. The values shown are not exactly $\psi \circ \phi\left( \frac{\partial r_i}{\partial A_j} \right)$, but simple matrix operations discussed in \cite{Kauffman:OnKnots} give us these unsigned weights which are more convenient for calculation.

\begin{example}\label{kaufex}
Consider the following weighted diagram of the trefoil and its three states. (The two starred regions are the disregarded faces.) 
\begin{figure}[ht]
\raisebox{-30pt}{\begin{picture}(100,70)(-20,0)
\put(0,0){\includegraphics[width=.8in]{FaceSelect.eps}}
\put(16,24){\tiny{t}}
\put(22,27){\tiny{1}}
\put(35,10){\tiny{t}}
\put(20,11){\tiny{-1}}
\put(29,18){\tiny{1}}
\put(38,25){\tiny{1}}
\put(44,20){\tiny{-1}}
\end{picture}}
\hspace{.5in} $\mathcal{S} = \left\{ \raisebox{-20pt}{\includegraphics[width=.6in]{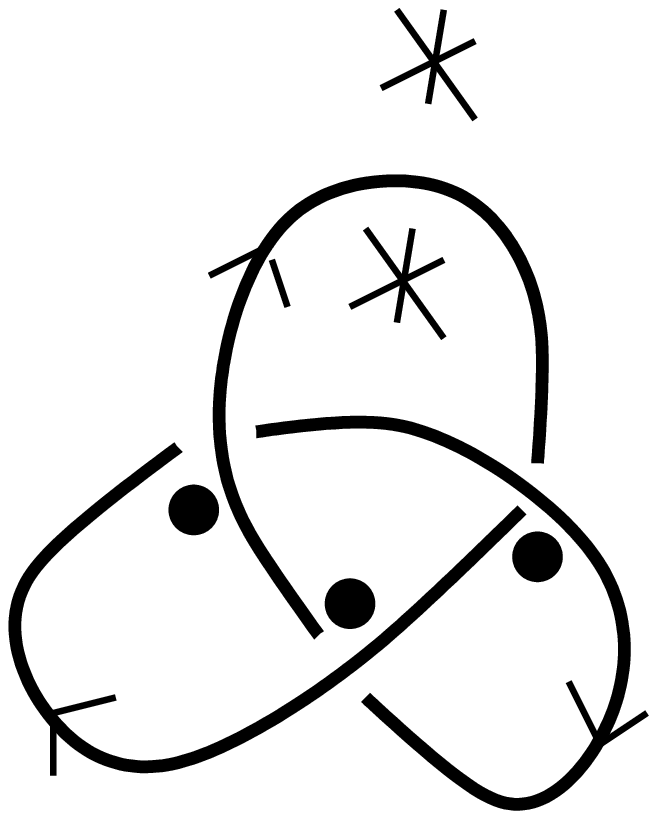}},\hspace{.3in}\raisebox{-20pt}{\includegraphics[width=.6in]{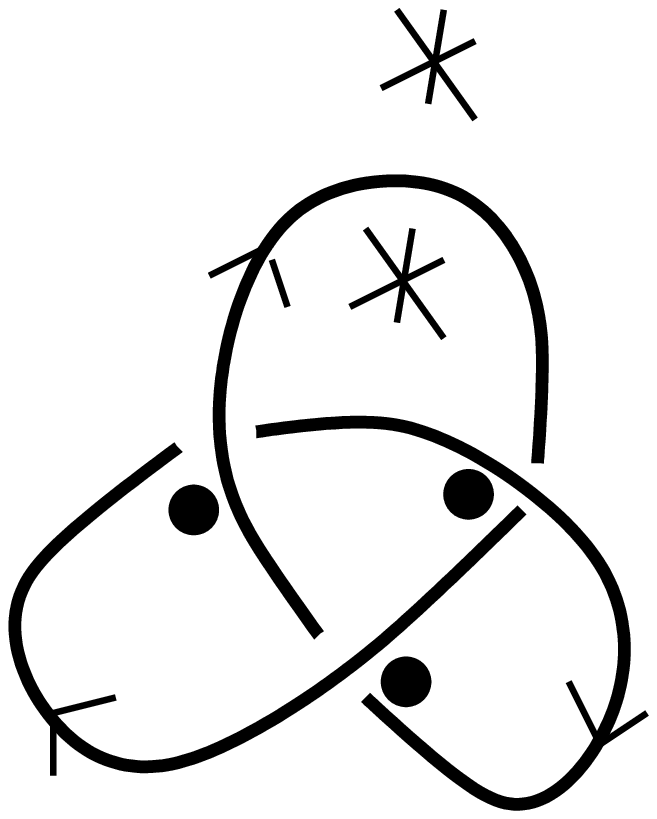}},\hspace{.3in}\raisebox{-20pt}{\includegraphics[width=.6in]{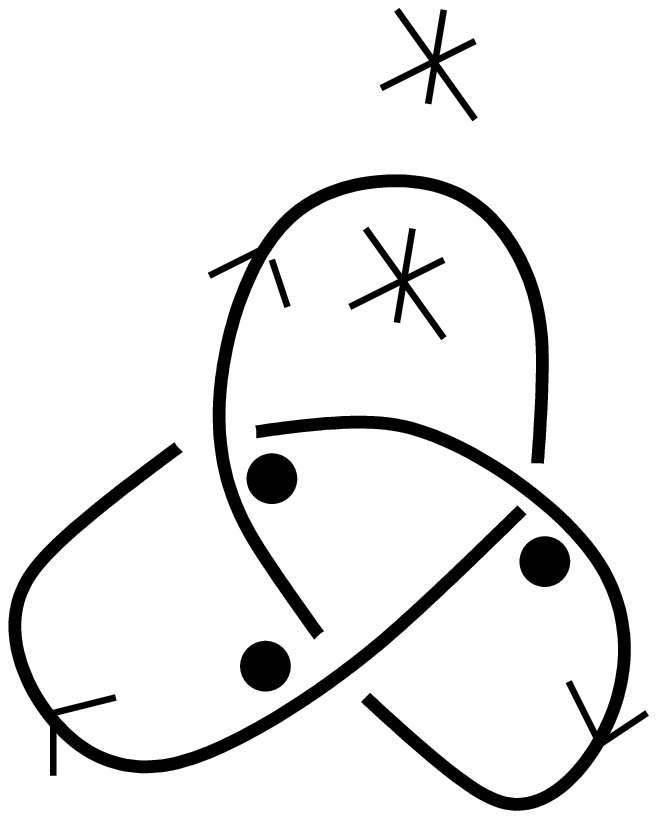}} \right\}$
\end{figure}

\noindent Then using Algorithm \ref{KA} we calculate the Alexander polynomial as follows.
\begin{figure}[ht]
\scalebox{.9}{$\Delta_{K}(t) \dot{=} w\left( \raisebox{-15pt}{\includegraphics[width=.4in]{State2.eps}}\right) +  w\left( \raisebox{-15pt}{\includegraphics[width=.4in]{State1.eps}}\right) + w\left( \raisebox{-15pt}{\includegraphics[width=.4in]{State3.eps}}\right) = t^2 -t + 1$}
\end{figure}
\end{example}

\section{The twisted dimer model}
We will begin with a brief description of the twisted Alexander polynomial. This polynomial was originally described by Lin using the Seifert matrix \cite{Lin:TwistedAlexander}  and has also been described by Kirk-Livingston using the language of Reidemeister torsion \cite{KirkLivingston:TwistedAlexander}.  Our description is adapted from Wada's exposition in \cite{Wada:TwistedAlexander} which defines the twisted Alexander polynomial for all finitely presented groups, We choose this description because it most clearly shows the generalization of the matrix described in Section \ref{Adet}. The polynomial that Wada defines to be the twisted Alexander is a certain quotient. On this point we depart from Wada's terminology and follow the work of Lin who refers to the numerator of Wada's quotient as the twisted Alexander polynomial.

\subsection{The twisted Alexander polynomial for knots} \label{TwistedAlex}

Recall the maps $\phi: \mathbb{Z}[\mathcal{F}_m] \rightarrow \mathbb{Z}[\pi]$ and $\psi: \mathbb{Z}[\pi] \rightarrow \mathbb{Z}[t^{\pm1}]$ from section \ref{Adet}. Let $R$ be an integral domain, and let $\rho$ be a finite dimensional representation of $\pi$, $\rho: \pi \rightarrow GL_n(R)$. We can extend $\rho$ to a ring homomorphism $\rho: \mathbb{Z}[\pi] \rightarrow M_n(R)$.

We put these maps together to define
$$\Phi = (\rho \otimes \psi) \circ \phi: \mathbb{Z}[\mathcal{F}_m] \rightarrow M_n(R[t^{\pm 1}]).$$
Now using this map construct $M_{K,\rho}$ a block matrix with $ij^{th}$ block entry $\Phi\left( \frac{\partial r_i}{\partial A_j} \right)$. Wada calls this matrix the Alexander matrix associated to the representation $\rho$. The Alexander matrix in this case has dimensions $n(m-1) \times nm$. Consider the submatrix $M'_{K,\rho}$ that comes from deleting any block column corresponding to a face of $D_K$ that is adjacent to the unbounded face. 

Up to sign and multiplication by a power of $t^{\pm 1}$ the determinant of $M_{K,\rho}'$ is well-defined and invariant under Reidemeister moves. Thus we define the twisted Alexander polynomial of the pair $K, \rho$ denoted $\Delta_{K,\rho}(t)$ to be $$\Delta_{K,\rho}(t) \dot{=} det(M'_{K,\rho}).$$

Given the trivial representation $\rho: \pi \rightarrow \mathbb{C}$ we see that $\Delta_{K,\rho}(t) \dot{=} \Delta_K(t)$. In general the rows in the twisted Alexander matrix replace occurrences of $1$ in Algorithms \ref{KA} and \ref{dimstat} with $Id\in GL_n(R)$ and occurrences of $t$ with $tX$ where $X$ is the element of $GL_n(R)$ assigned to the Wirtinger generator linking the overstrand at the associated crossing. 

We conclude this subsection with an example calculation of the twisted Alexander polynomial.
\begin{example} \label{trefex}
Consider the following diagram of the trefoil. We have labeled the regions of the diagram that give the Dehn generators as well as the arcs of the knot that give the Wirtinger generators.
\begin{figure}[ht]
\scalebox{.75}{\begin{picture}(100,60)(0,0)
\put(0,0){\includegraphics[width=1in]{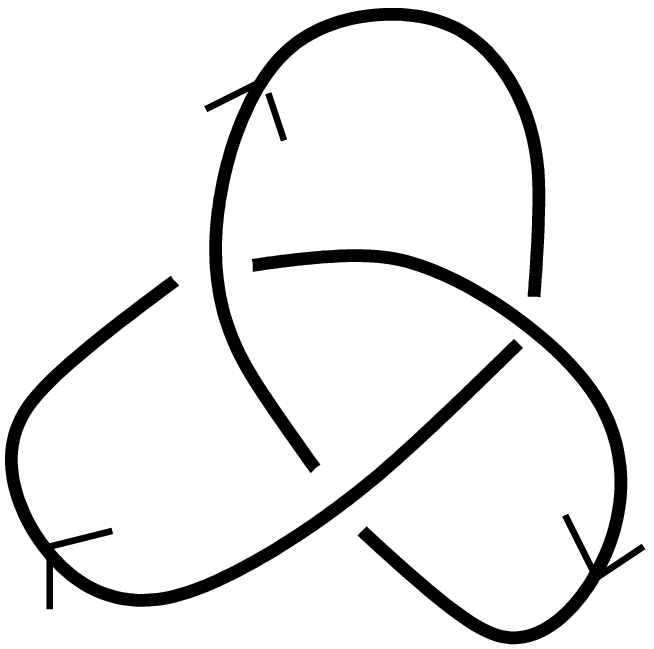}}
\put(0,50){$a_0$}
\put(10,20){$a_1$}
\put(35,30){$a_2$}
\put(37,55){$a_4$}
\put(49,15){$a_3$}
\put(63,60){$\bf{c_1}$}
\put(75,10){$\bf{c_2}$}
\put(-7,10){$\bf{c_3}$}
\end{picture}}
\end{figure}

As noted by Proposition \ref{DW} we only need to know what our representation $\rho$ does to Wirtinger generators. We consider the coloring representation $\rho: \pi \rightarrow GL_3(\mathbb{Z})$ given by $$\rho(C_1) = \left(\begin{array}{ccc}
0&1&0\\
1&0&0\\
0&0&1\\
\end{array}\right), \hspace{.5in}
\rho(C_2) = \left(\begin{array}{ccc}
1&0&0\\
0&0&1\\
0&1&0\\
\end{array}\right), \hspace{.5in} \rho(C_3) = \left(\begin{array}{ccc}
0&0&1\\
0&1&0\\
1&0&0\\
\end{array}\right)$$
We set $A_0 = 1$ in the knot group, and we have three relations remaining. 

\begin{itemize}
\item[($r_1$)]{$A_2A_1^{-1}=A_4 = C_1$}
\item[($r_2$)]{$A_2A_4^{-1}=A_3 = C_2$}
\item[($r_3$)]{$A_2A_3^{-1}=A_1 = C_3$}
\end{itemize}

Now we build a block matrix with entries $\Phi\left( \frac{\partial r_i}{\partial A_j} \right)$. The map $\Phi$ will assign identity matrices to occurrences of $1$ and the various representation matrices scaled by $t$ when Wirtinger generators occur. In this case the matrix we get is as follows.

$$\left( \begin{array}{ccc|ccc|ccc|ccc}
0&-t&0&1&0&0&0&0&0&-1&0&0\\
-t&0&0&0&1&0&0&0&0&0&-1&0\\
0&0&-t&0&0&1&0&0&0&0&0&-1\\
\hline 0&0&0& 1&0&0& -1&0&0& -t&0&0\\
0&0&0&0&1&0&0&-1&0&0&0&-t\\
0&0&0&0&0&1&0&0&-1&0&-t&0\\
\hline -1&0&0&1&0&0&0&0&-t& 0&0&0\\
0&-1&0&0&1&0&0&-t&0&0&0&0\\
0&0&-1&0&0&1&-t&0&0&0&0&0\\
\end{array}\right)$$

Finally we remove the last block column corresponding to face $a_4$, and we take the determinant. We get $\Delta_{\textup{Tref}, \rho}(t) = -t^6 + t^5+t^4-2t^3+t^2+t-1 = -(-1+t)^2(1+t)^2(1-t+t^2)$.
\end{example}

\subsection{The twisted Alexander graph}
We want to build a graph that encodes the Alexander matrix for a pair $K,\rho$. This graph, which we will call the twisted Alexander graph, is a generalization of the Alexander graph defined in the dimer state sum algorithm. The twisted Alexander graph replaces single edges in the Alexander graph by ``twisted edges". For an $n$-dimensional representation these ``twisted edges" are each a copy of $K_{n,n}$ (the complete bipartite graph on $2n$ vertices) that will eventually encode the associated block entry in the Alexander matrix. 

Begin by fixing a knot $K$ with diagram $D_K$ and representation $\rho: \pi \rightarrow GL_n(R)$. As with the original Alexander graph, choose two adjacent faces of the diagram to disregard, one of which is the unbounded face. 

\begin{df}\label{twal}
Construct a bipartite graph $\Gamma' = (V'_1, V'_2, E')$ as follows.
\begin{itemize}
\item{The vertex set $V'_1$ has $n$ vertices for each crossing of the diagram.}
\item{The vertex set $V'_2$ has $n$ vertices for each face of the diagram.}
\item{If a face and a crossing are incident, insert a copy of $K_{n,n}$ going between the vertices corresponding to that face and crossing. If the face and crossing are not incident, no edges should connect their corresponding vertex sets.}
\end{itemize}
We will call this the {\bf twisted Alexander graph}.
\end{df}

\begin{df}\label{encode}
Let $M\in M_n(R([t^{\pm1}]))$. The complete bipartite graph $K = K_{n,n}$ has 
all possible edges between two sets of $V_r$ and $V_c$ each consisting of $n$ vertices. Let $K_M$ be the complete bipartite graph weighted according to $M$. More precisely begin by enumerating the vertices in $V_r$ with $v_{1,r}, \ldots, v_{n,r}$ and the vertices in $V_c$ with $v_{1,c}, \ldots, v_{n,c}$. Now the edge between $v_{i,r}$ and $v_{j,c}$ gets the entry in the $ij^{th}$ position of $M$. If the entry in $M$ is zero, we do not include the edge. We will call the weighted graph $K_M$ the {\bf graph encoding $M$}. 
\end{df}

\begin{example}
Let
$M = \left(\begin{array}{ccc}
a&b&0\\
0&0&c\\
d&0 & e\\
\end{array}\right)$. Then the graph encoding $M$ is
\begin{figure}[ht]
\scalebox{.7}{\begin{picture}(100,50)(-20,10)
\put(0,0){\includegraphics[width=1in]{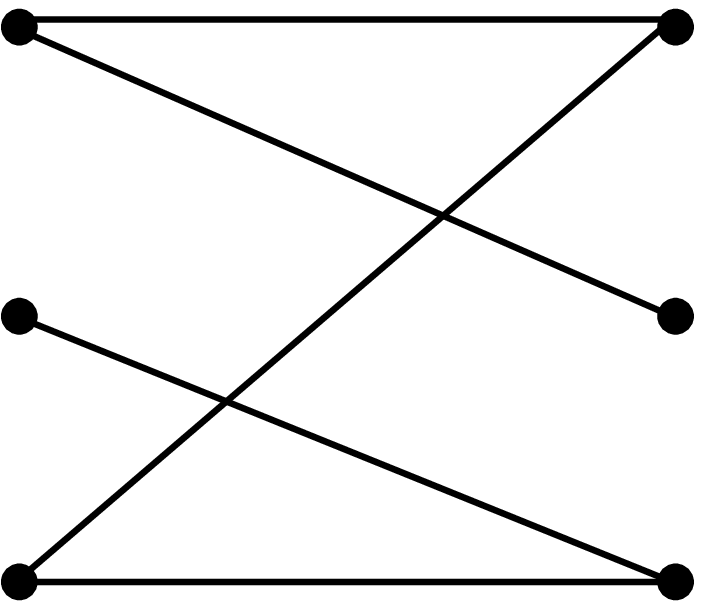}}
\put(30,65){$a$}
\put(8,46){$b$}
\put(60,10){$c$}
\put(25,30){$d$}
\put(30,5){$e$}
\end{picture}}
\end{figure}
\end{example}

Enumerate the vertices at every crossing and every face of the twisted Alexander graph $\Gamma'$. We endow the graph $\Gamma'$ with the weighting $\alpha_{\rho}: E' \rightarrow R[t^{\pm1}]$ so that the copy of $K_{n,n}$ (or the ``twisted edge")  connecting the collection of vertices for a crossing and face is the graph encoding the matrix shown in Figure \ref{twistweight}. Here $X$ is the element of $GL_n(R)$ assigned by $\rho$ to the Wirtinger generator corresponding to the overstrand in the figure.

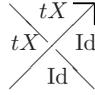
\begin{figure}[ht]
\scalebox{.8}{\begin{picture}(40,40)(0,-40)
\put(40,0){\line(-1,-1){40}}
\put(0,0){\line(1,-1){18}}
\put(40,-40){\line(-1,1){18}}
%\put(0,0){\line(1,0){10}}
%\put(0,0){\line(0,-1){10}}
\put(40,0){\line(-1,0){10}}
\put(40,0){\line(0,-1){10}}
\put(14,-7){$tX$}
\put(-3,-22){ $tX$}
\put(31,-22){Id}
\put(14,-38){ Id}
\end{picture}}
\caption{Weights for the twisted Alexander graph} \label{twistweight}
\end{figure} 

Then the following is an immediate consequence of our definitions for $\Gamma'$ and $\alpha_{\rho}$.
\begin{pro}
The permanent of the matrix $M'_{K,\rho}$ is equal to the partition function of the graph $\Gamma'$ weighted by $\alpha_{\rho}$. In other words $Z(\Gamma'_{\alpha_{\rho}}) = \textup{Perm}(M'_{K,\rho})$.
\end{pro}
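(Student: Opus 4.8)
The plan is to reduce the statement to the elementary identity established in Section \ref{Dimer}, namely that the partition function of \emph{any} weighted bipartite graph equals the permanent of its weight matrix, $Z(\Gamma_{\mu}) = \textup{Perm}(M(\Gamma_{\mu}))$. Concretely, I would show that, for a suitable ordering of the vertex sets $V'_1$ and $V'_2$, the weight matrix $M(\Gamma'_{\alpha_{\rho}})$ coincides entry by entry with the block matrix $M'_{K,\rho}$. Once this identification is in hand the proposition is immediate: $Z(\Gamma'_{\alpha_{\rho}}) = \textup{Perm}(M(\Gamma'_{\alpha_{\rho}})) = \textup{Perm}(M'_{K,\rho})$. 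Note that although $M'_{K,\rho}$ is organized into $n \times n$ blocks, its individual entries are scalars in the commutative ring $R[t^{\pm 1}]$, so the ordinary permanent is well defined and the cited identity applies verbatim.

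First I would fix the orderings. Order $V'_1$ by listing the $n$ vertices of each crossing consecutively, crossing by crossing, using within each crossing the enumeration $v_{1,r}, \ldots, v_{n,r}$ of Definition \ref{encode}; order $V'_2$ the same way, face by face, using $v_{1,c}, \ldots, v_{n,c}$. With respect to these orderings the weight matrix $M(\Gamma'_{\alpha_{\rho}})$ decomposes into $n \times n$ blocks indexed by pairs (crossing $i$, face $j$), and the $(i,j)$ block records the weights on the edges between the vertices of crossing $i$ and those of face $j$. Since relations correspond bijectively to crossings, this row/column indexing matches that of $M'_{K,\rho}$.

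Next I would compute a generic block. By Definition \ref{twal} the relevant edges form a copy of $K_{n,n}$ precisely when crossing $i$ and face $j$ are incident, and there are no such edges otherwise; hence the non-incident blocks are zero. When the crossing and face are incident, Definition \ref{encode} together with the weighting $\alpha_{\rho}$ shows that the block is exactly the $n \times n$ matrix attached to that corner in Figure \ref{twistweight} --- one of $tX$ or $\mathrm{Id}$ --- with the convention that a vanishing matrix entry corresponds to an absent edge, hence to a $0$ in the block. This reproduces precisely the block entry $\Phi\!\left(\frac{\partial r_i}{\partial A_j}\right)$ of $M'_{K,\rho}$, since, as recorded after the definition of the twisted polynomial and via Lemma \ref{DW}, that free derivative equals $\mathrm{Id}$, $tX$, or $0$ in exactly the incidence pattern dictated by Figure \ref{twistweight}.

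The only real content --- everything else being bookkeeping --- is this last matching of a single corner: verifying that the local weight prescription of Figure \ref{twistweight} genuinely reproduces the free-derivative blocks $\Phi\!\left(\frac{\partial r_i}{\partial A_j}\right)$, and doing so compatibly with the internal enumeration of Definition \ref{encode}. In particular I would need to check that the same row-enumeration is used for the $n$ vertices of every crossing and the same column-enumeration for the $n$ vertices of every face, so that the $(k,\ell)$ position inside each $K_{n,n}$ block is consistently identified with the $(k,\ell)$ entry of the corresponding matrix; an inconsistent internal ordering would transpose or permute individual blocks and break the entrywise equality. Once these local identifications are confirmed, the global equality $M(\Gamma'_{\alpha_{\rho}}) = M'_{K,\rho}$, and hence the proposition, follow.
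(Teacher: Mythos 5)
Your proposal is correct and matches the paper's approach: the paper states this proposition with no written proof, calling it an immediate consequence of the definitions of $\Gamma'$ and $\alpha_{\rho}$, and the implicit argument is precisely your reduction --- the weight matrix of $\Gamma'_{\alpha_{\rho}}$ coincides block by block with $M'_{K,\rho}$, so the general identity $Z(\Gamma_{\mu}) = \textup{Perm}(M(\Gamma_{\mu}))$ from Section 2 finishes the job. Your elaboration of the bookkeeping (consistent internal enumeration of the $K_{n,n}$ blocks, zero entries corresponding to absent edges) is exactly the content the paper leaves to the reader.
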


\subsection{Kuperberg's tricks}
As we discussed at length in Section \ref{Dimer} we would like to find a modification of the weighting $\alpha_{\rho}$ that would allow us to encode the twisted Alexander polynomial directly. The problem is that by replacing single edges in the Alexander graph by twisted edges, we no longer necessarily have a plane graph. 

We will use two techniques due to Kuperberg to solve this problem \cite{Kuperberg:PermanentDeterminant}: edge tripling and butterflies. This will enable us to modify an embedding of the graph $\Gamma'$ and the weighting $\alpha_{\rho}$ to get a weighted plane graph that will encode the twisted Alexander polynomial as desired. For the remainder of this section, fix an embedding of $\Gamma'$.

In order to force planarity we need to require that each pair of edges in $\Gamma'$ intersect at most once. We can accomplish this by repeatedly tripling edges as shown in Figure \ref{triple}. If our weight function $\alpha_{\rho}$ assigns $a\in R[t^{\pm1}]$ to the edge we modify the weights as shown in the figure.

\begin{figure}[ht]
\begin{picture}(80,10)(20,0)
\put(0,0){\includegraphics[width=1.5in]{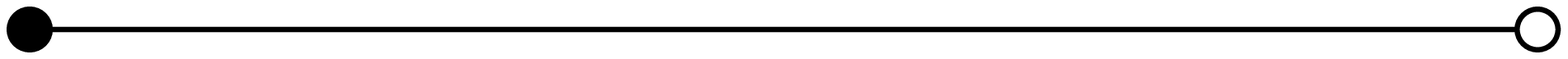}}
\put(50,5){$a$}
\end{picture}

\vspace{.35in}

\begin{picture}(80,10)(20,0)
\put(0,0){\includegraphics[width=1.5in]{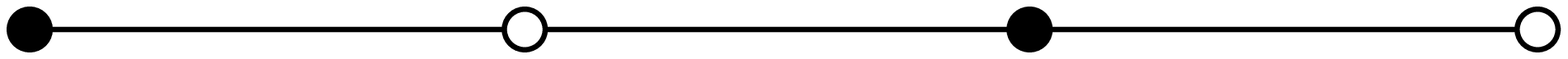}}
\put(20,5){$a$}
\put(47,5){$-1$}
\put(85,5){$1$}
\end{picture}
\caption{Replace a single edge with three edges.}\label{triple}
\end{figure}

Call the new graph obtained after tripling all necessary edges $\Gamma''$, and call the updated weight function $\alpha_{\rho, t}$. We denote the associated matrix by $M''_{K,\rho}$. Kuperberg argues that this operation changes the determinant at most up to a sign. In other words $$\det(M''_{K,\rho} )= \pm \det(M'_{K,\rho}).$$ Indeed this is easy to see as the Alexander matrix for the pair $K,\rho$ changes as follows for each edge tripling.
\begin{figure}[ht]
\begin{picture}(40, 80)(30,-30)
\put(0,0){\begin{tiny}
$\left( \begin{array}{c|ccccccc}
a& & & & * & & &\\
\\
 \hline\\
&&&&&&&\\
&&&&&&&\\
*& &&& &&&   \\
&&&&&&&\\
&&&&&&&\\
&&&&&&&
 \end{array} \right)$
 \end{tiny}}
 \put(50,-10){\Huge{*}}
 \end{picture}
 \hspace{.75in} \raisebox{20pt}{$\stackrel{tripling}{\longrightarrow}$} \hspace{.5in}
\begin{picture}(40, 80)(10,-30)
\put(0,0){\begin{tiny}$\left( \begin{array}{cc|ccccccc}
-1&a& 0& & & \cdots  & & & 0\\
1&0&&&&*&&&\\
 \hline
0&&&&&&&\\
&&&&&&&\\
\vdots& * &&& &&&   \\
&&&&&&&&\\
0&&&&&&&&
 \end{array} \right)$
\end{tiny}}
 \put(80,-10){\Huge{*}}
 \end{picture}
\end{figure}

Now we assume that our graph $\Gamma''$ has edges which pairwise intersect at most once. The final step in forcing planarity is to replace each pair of crossing edges with what Kuperberg calls a butterfly.  This object and the necessary weight modifications are shown in Figure \ref{butterfly}.

\begin{figure}[ht]
\scalebox{.7}{\begin{picture}(50,100)(20,0)
\put(0,0){\includegraphics[width=1in]{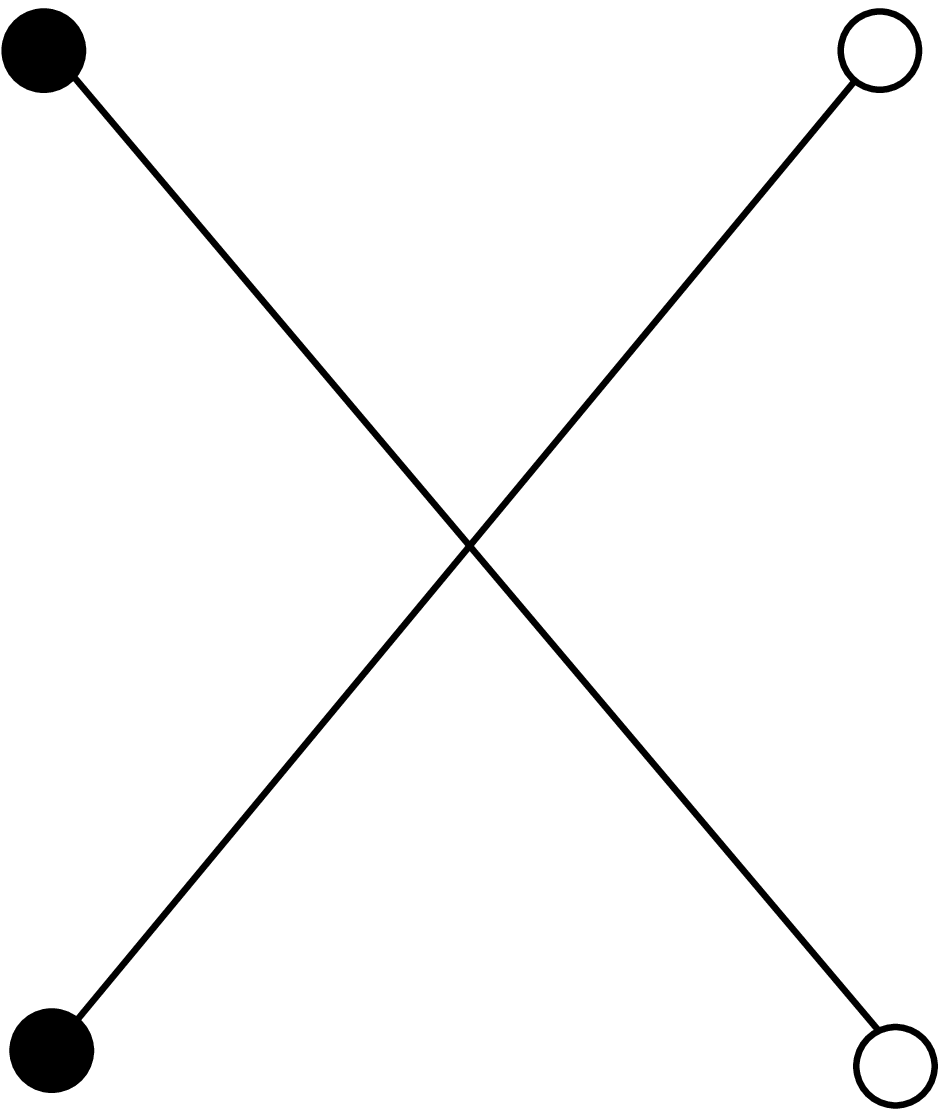}}
\put(10,50){$b$}
\put(50,50){$a$}
\end{picture}
\hspace{1in}
\begin{picture}(50,100)(0,0)
\put(0,0){\includegraphics[width=1in]{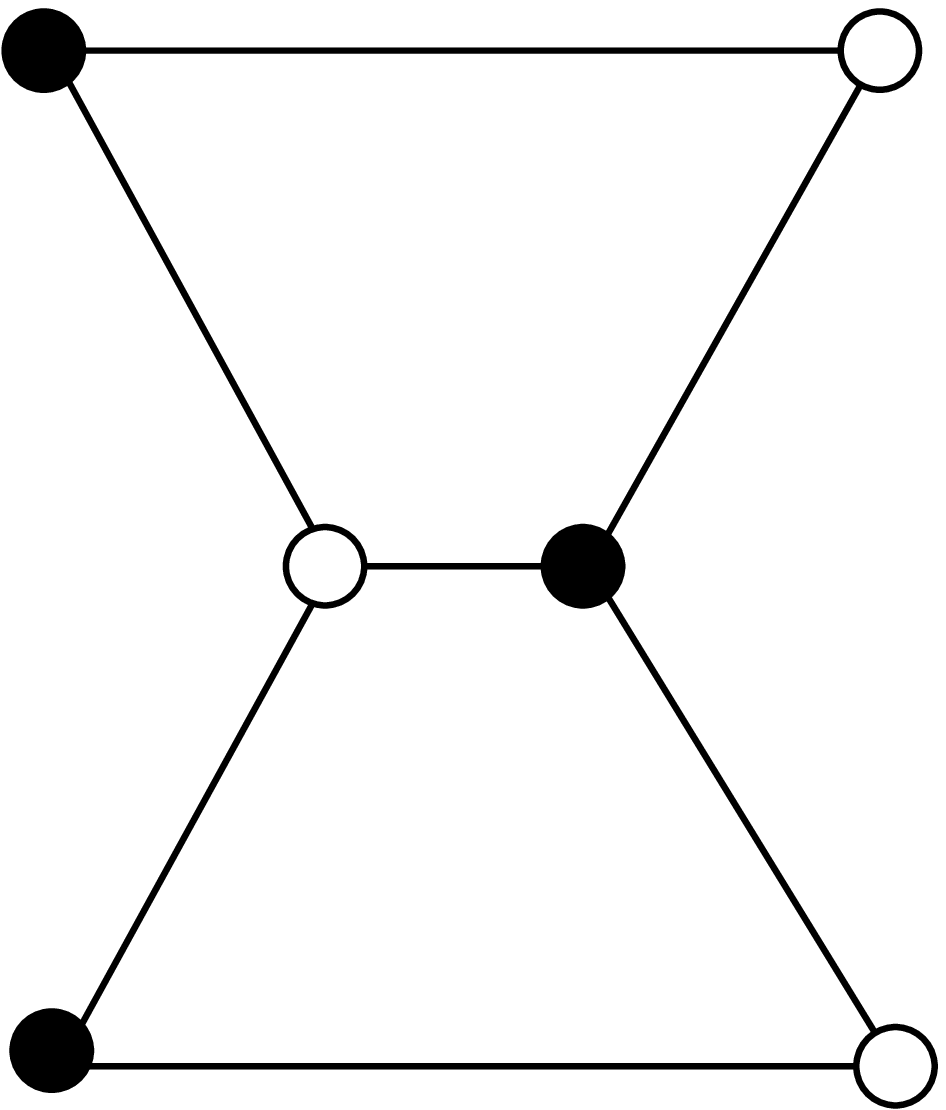}}
\put(0,50){-1}
\put(60,50){$1$}
\put(0,26){$a$}
\put(60,26){$-b$}
\put(30,85){1}
\put(30,45){-1}
\put(30,5){$ab$}
\end{picture}}
\caption{Replace crossing edges with the butterfly.}\label{butterfly}
\end{figure}

Call the new graph obtained after inserting all necessary butterflies $\Gamma'''_{\alpha_{\rho}}$, and call the updated weight function $\alpha_{\rho,b}$. 
The associated matrix is $M'''_{K,\rho}$. Again Kuperberg argues that $\det(M'''_{K,\rho} )= \pm \det(M'_{K,\rho}).$ We can also see this directly by examining the changes to the Alexander matrix for the pair $K,\rho$.

\begin{figure}[ht]
\begin{picture}(40, 80)(60,-30)
\put(0,0){\begin{tiny}$\left( \begin{array}{cc|ccccccc}
0&a& & & &   & & & \\
b&0&&&&&&&\\
 \hline
&&&&&&&\\
&&&&&&&\\
& &&& &&&   \\
&&&&&&&&\\
&&&&&&&&
 \end{array} \right)$
\end{tiny}}
 \put(63,-13){\Huge{*}}
 \put(65, 10){\Large{*}}
 \put(15, -13){\Large{*}}
 \end{picture}
 \hspace{.3in} \raisebox{25pt}{$\stackrel{butterfly}{\longrightarrow}$} \hspace{.75in}
 \begin{picture}(40, 80)(40,-30)
\put(0,0){\begin{tiny}$\left( \begin{array}{ccc|ccccccc}
-1&-1& a& 0& &   & \cdots & &&0 \\
1&1&0&&& &&&&\\
-b&0&ab&&&&&&&\\
 \hline
0&&&&&&&&\\
&&&&&&&&\\
\vdots&  &&& &&&&   \\
&&&&&&&&&\\
0&&&&&&&&&
 \end{array} \right)$
\end{tiny}}
 \put(105,-15){\Huge{*}}
  \put(105, 12){\Large{*}}
 \put(40, -20){\Large{*}}
 \end{picture}
\end{figure}

\subsection{The twisted dimer model}

We can now put together the last two subsections to state the twisted dimer model. Fix a knot $K$, a generic diagram $D_K$, and a representation $\rho: \pi \rightarrow GL_n(R)$.

\begin{alg}{\bf Twisted dimer model}
\begin{enumerate}
\item[(T1)]{Build the twisted Alexander graph as described in Definition \ref{twal} with associated weight function $\alpha_{\rho}$.}
\item[(T2)]{Choose an embedding of $\Gamma'$ that minimizes the number of edge crossings.}
\item[(T3)]{Triple any edges necessary so that each pair of edges intersects at most once, and update the weight function as shown in Figure \ref{triple}. Call the new graph $\Gamma''$ and new weight function $\alpha_{\rho,t}$.}
\item[(T4)]{Replace any crossing pairs of edges with butterflies, and update the weight function as shown in Figure \ref{butterfly}. Call the resulting graph $\Gamma'''$ and the new weight function $\alpha_{\rho,b}$.}
\item[(T5)]{Use the algorithm described in the proof of Proposition \ref{Kweight} to get a Kasteleyn weighting $\epsilon$ for $\Gamma'''$.}
\item[(T6)]{Calculate the partition function 
$$Z(\Gamma'''_{\epsilon \cdot \alpha_{\rho,b}}) =  \sum_{m\in \mathcal{M}} \left( \prod_{e\in m} \epsilon(e)\cdot \alpha_{\rho,b}(e) \right).$$}
\end{enumerate}
\end{alg}

Then our main theorem follows by construction.

\begin{theorem}
The twisted dimer model described above calculates the twisted Alexander polynomial for the pair $K, \rho$. In other words $$Z(\Gamma'''_{\epsilon \cdot \alpha_{\rho,b}}) \dot{=} \Delta_{K,\rho}(t).$$
\end{theorem}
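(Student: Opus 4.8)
The plan is to chain together the equalities already assembled in the preceding subsections, reducing the statement to Kasteleyn's Theorem \ref{KasteleynsTheorem} applied to the plane graph $\Gamma'''$. The starting point is the definition $\Delta_{K,\rho}(t) \dot{=} \det(M'_{K,\rho})$ together with the proposition identifying $Z(\Gamma'_{\alpha_\rho}) = \textup{Perm}(M'_{K,\rho})$, which in particular records that the weight matrix of the twisted Alexander graph $\Gamma'$ is $M'_{K,\rho}$. The obstruction to applying Kasteleyn's theorem directly to $\Gamma'$ is that replacing single edges by copies of $K_{n,n}$ destroys planarity; the tripling and butterfly steps (T3)--(T4) exist precisely to repair this while controlling the determinant.

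First I would verify that each local surgery behaves as advertised at the level of weight matrices. Tripling an edge (Figure \ref{triple}) adds one vertex to each of $V_1$ and $V_2$, so it preserves the bipartite structure, and the weight matrix of the enlarged graph is exactly the bordered matrix displayed after the arrow; similarly each butterfly (Figure \ref{butterfly}) adds one vertex to each side and yields the displayed bordered matrix. Thus $M(\Gamma''_{\alpha_{\rho,t}}) = M''_{K,\rho}$ and $M(\Gamma'''_{\alpha_{\rho,b}}) = M'''_{K,\rho}$. A Laplace expansion along the newly introduced rows and columns of these bordered matrices --- the computation indicated by Kuperberg and recorded in the excerpt --- shows that each step alters the determinant only by a sign, so that
$$\det(M'''_{K,\rho}) = \pm\,\det(M'_{K,\rho}).$$
By construction the repeated tripling makes all pairs of edges meet at most once and the butterflies remove all remaining crossings, so $\Gamma'''$ is a bipartite plane graph.

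With planarity in hand, the Kasteleyn weighting $\epsilon$ produced by (T5) via the algorithm of Proposition \ref{Kweight} exists, and Theorem \ref{KasteleynsTheorem} applies to $\Gamma'''_{\alpha_{\rho,b}}$, giving
$$Z(\Gamma'''_{\epsilon\cdot\alpha_{\rho,b}}) = \textup{Perm}(K(\Gamma'''_{\alpha_{\rho,b}})) = \pm\,\det(M(\Gamma'''_{\alpha_{\rho,b}})) = \pm\,\det(M'''_{K,\rho}).$$
Combining this with the determinant comparison above and the definition of $\Delta_{K,\rho}(t)$ yields the chain
$$Z(\Gamma'''_{\epsilon\cdot\alpha_{\rho,b}}) = \pm\,\det(M'''_{K,\rho}) = \pm\,\det(M'_{K,\rho}) \dot{=} \Delta_{K,\rho}(t),$$
which is the assertion, since $\dot{=}$ already absorbs the ambiguity of sign and powers of $t$.

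The step I expect to be the main obstacle is the bookkeeping in the second paragraph: one must check that the graph-theoretic surgeries correspond precisely to the displayed matrix borderings, that bipartiteness and the vertex orderings are respected so that the weight matrices are literally $M''_{K,\rho}$ and $M'''_{K,\rho}$, and that the Laplace expansions introduce no spurious factor beyond $\pm 1$. Everything downstream of this --- the existence of the Kasteleyn weighting and the passage from permanent to determinant --- is then an immediate citation of Theorem \ref{KasteleynsTheorem}.
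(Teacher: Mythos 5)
Your proof is correct and follows essentially the same route as the paper: the paper's own ``proof'' is simply the remark that the theorem follows by construction, i.e.\ by chaining the definition $\Delta_{K,\rho}(t) \dot{=} \det(M'_{K,\rho})$, the identification of the weight matrix of $\Gamma'$ with $M'_{K,\rho}$, Kuperberg's sign-preserving tripling and butterfly surgeries, and Kasteleyn's Theorem \ref{KasteleynsTheorem} applied to the resulting plane graph $\Gamma'''$. Your write-up just makes explicit the bookkeeping (bordered matrices, Laplace expansion, planarity) that the paper delegates to the preceding subsections and to Kuperberg's work.
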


We conclude by applying the twisted dimer model to the pair Tref, $\rho$ from Example \ref{trefex}.

\begin{example}\label{twdimex}
Recall that we are considering the following diagram for the trefoil and the representation $\rho: \pi \rightarrow GL_3(\mathbb{Z})$ that comes from a non-trivial 3-coloring. We've labeled the diagram to indicate the weight function $\alpha_{\rho}$ that will be assigned to the twisted Alexander graph.
\begin{figure}[ht]
\scalebox{1.2}{\begin{picture}(100,70)(-20,0)
\put(0,0){\includegraphics[width=.8in]{FaceSelect.eps}}
\put(12,24){\tiny{tX}}
\put(23,27){\tiny{Id}}
\put(35,10){\tiny{tZ}}
\put(18,12){\tiny{Id}}
\put(27,20){\tiny{Id}}
\put(35,25){\tiny{Id}}
\put(42,18){\tiny{Id}}
\end{picture}}
\end{figure}

Recall that the representation matrices are $$\rho(c_1) = X = \left(\begin{array}{ccc}
0&1&0\\
1&0&0\\
0&0&1\\
\end{array}\right)$$
$$\rho(c_2) = Y = \left(\begin{array}{ccc}
1&0&0\\
0&0&1\\
0&1&0\\
\end{array}\right) \hspace{1in} \rho(c_3) = Z = \left(\begin{array}{ccc}
0&0&1\\
0&1&0\\
1&0&0\\
\end{array}\right)$$

The twisted Alexander graph will have 3 times the number of vertices in the original Alexander graph. Each edge in the original graph will be replaced with a copy of $K_{3,3}$. The following labeling of the Alexander graph helps us to see what the twisted graph will look like.
\begin{figure}[ht]
\scalebox{.8}{\begin{picture}(140,60)(-15,0)
\put(0,0){\includegraphics[width=1.5in]{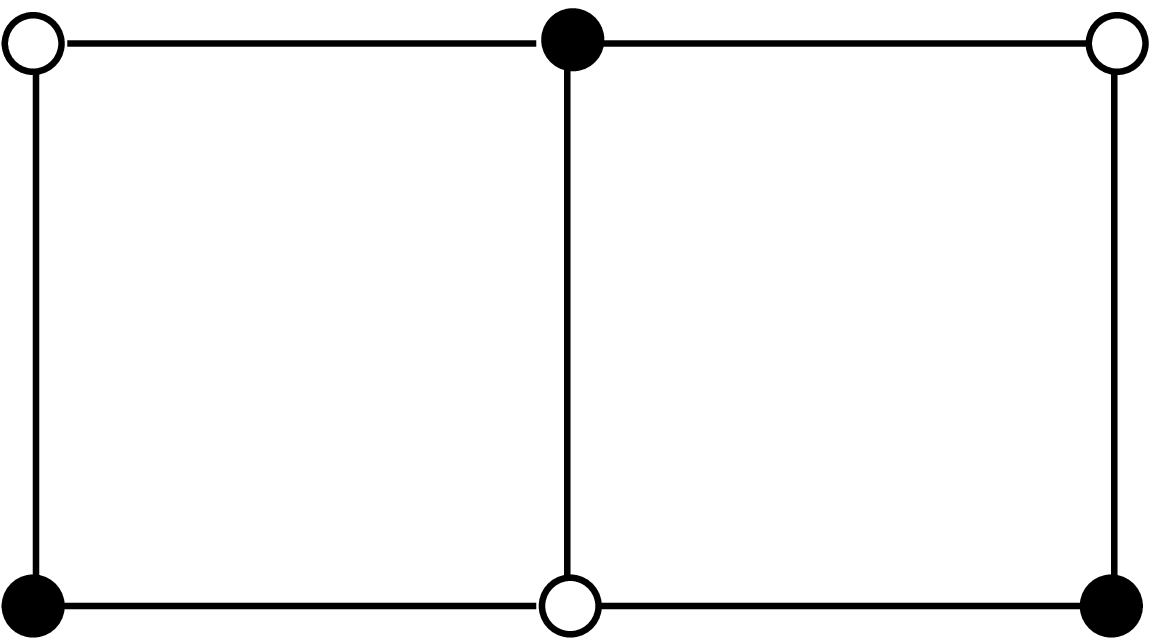}}
\put(-15,25){$tX$}
\put(42,25){Id}
\put(90,25){Id}
\put(25,60){Id}
\put(75,60){Id}
\put(25,-10){Id}
\put(75,-10){$tZ$}
\end{picture}}
\end{figure}

Finally, then, we see that the twisted Alexander graph for the pair Tref, $\rho$ has the following form. To simplify the pictures, all thickened edges have weight $t$, and all other edges have weight $1$.  As we mentioned in Definition \ref{encode}, we do not draw weight $0$ edges since any perfect matching that includes a weight $0$ edge will not contribute to the value of the partition function.
\begin{figure}[ht]
\includegraphics[width=1.8in]{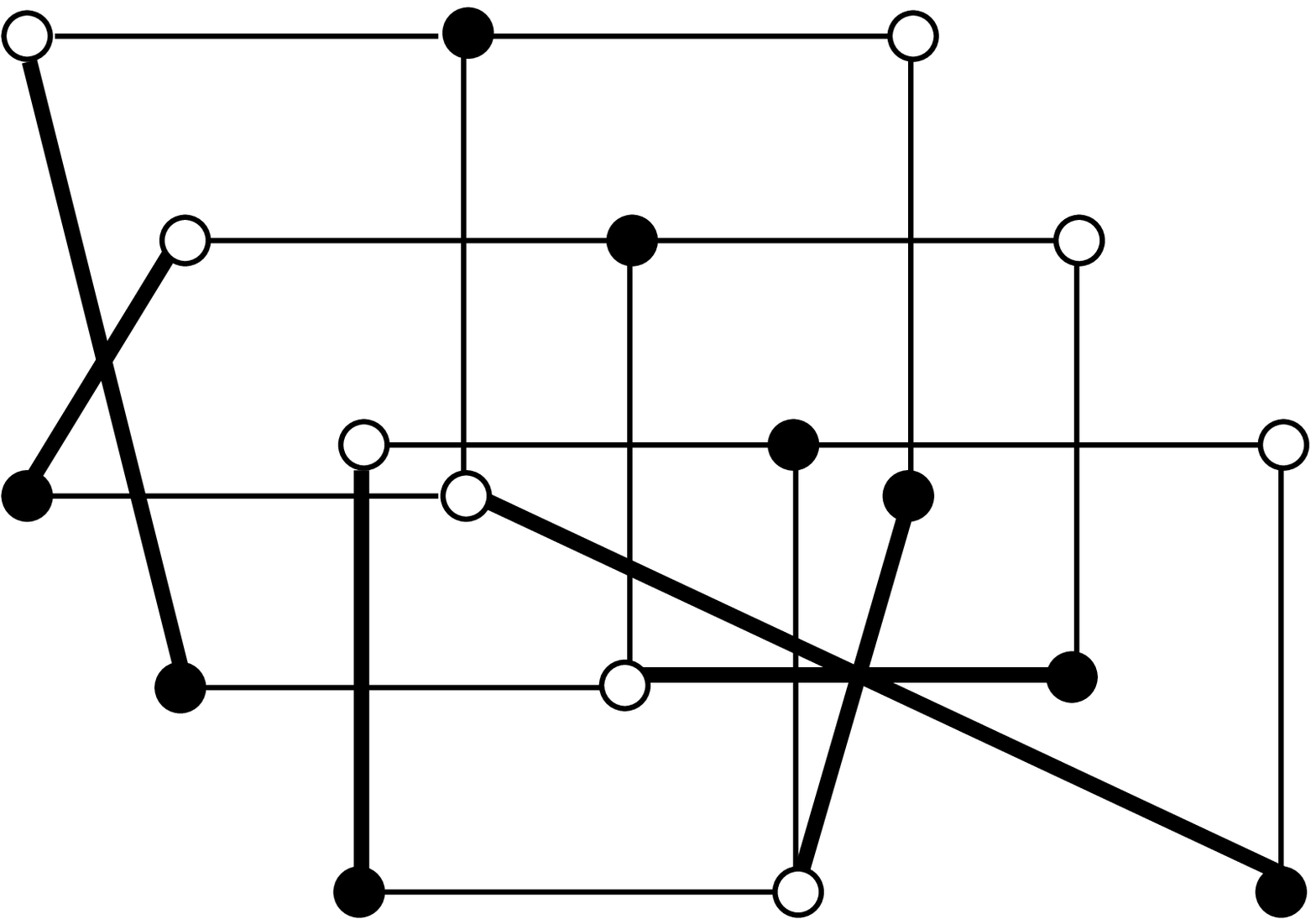}
\end{figure}

We can see through the following sequence of manipulations in Figure \ref{unravel} that, in this case, the twisted Alexander graph is a plane graph. While the embedding has changed, on the level of abstract graphs we have that $\Gamma' = \Gamma'' = \Gamma'''$ and $\alpha_{\rho} = \alpha_{\rho, t} = \alpha_{\rho,b}$.

\begin{figure}[ht]

(1) \raisebox{-30pt}{\includegraphics[width=1.3in]{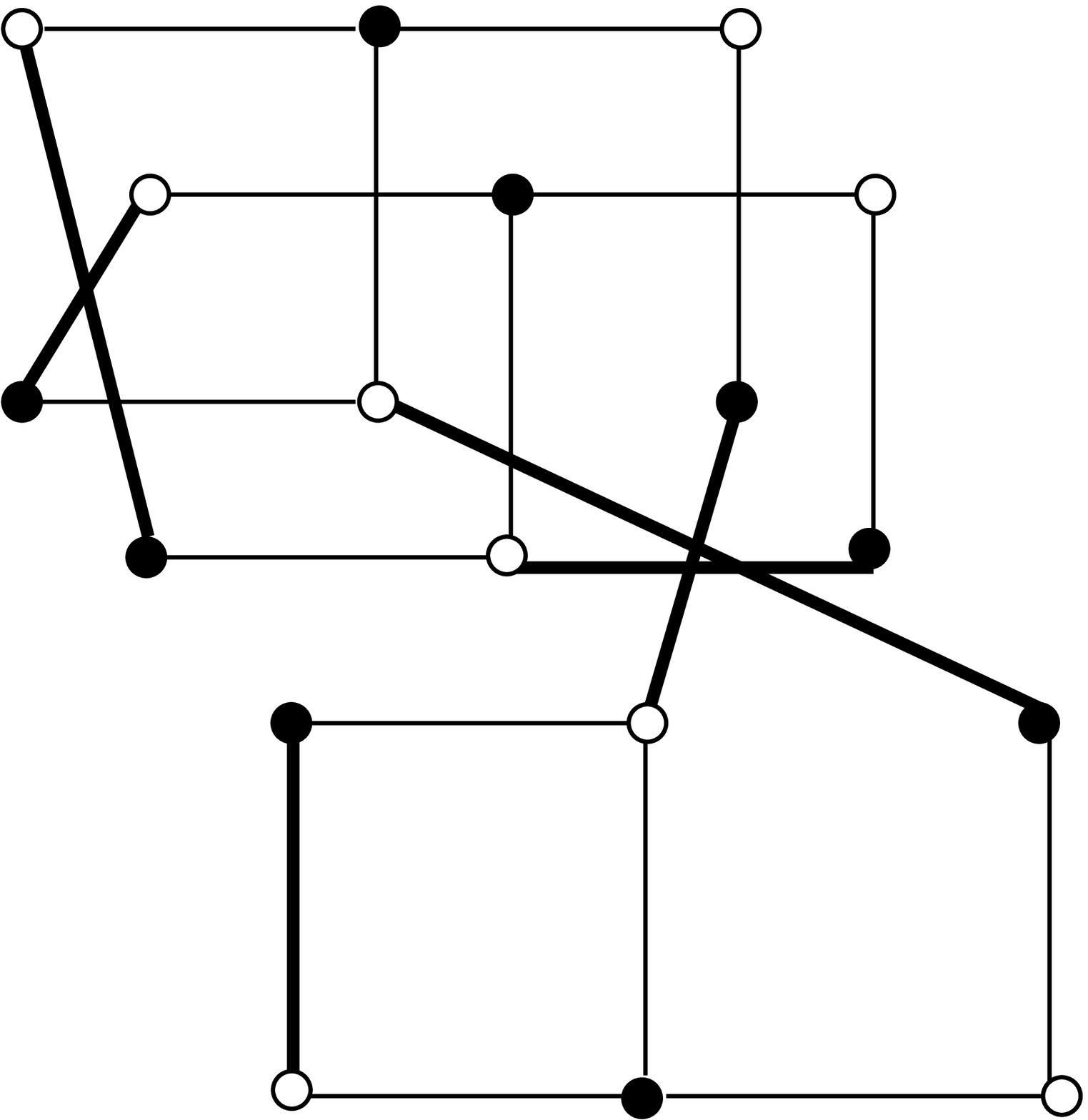}} \hspace{1in}
(2) \raisebox{-30pt}{\includegraphics[width=1.8in]{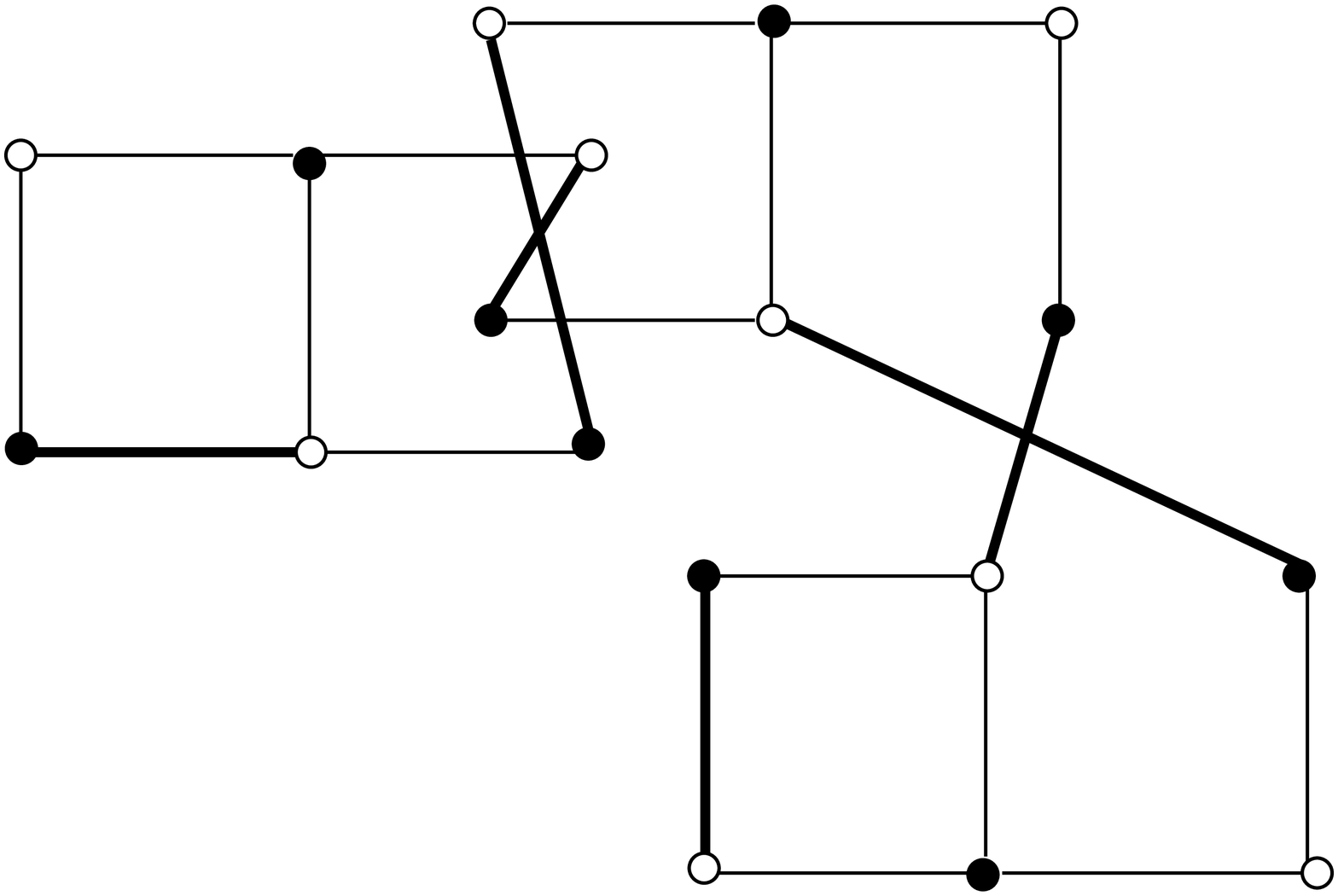}}

(3) \raisebox{-30pt}{\includegraphics[width=2.2in]{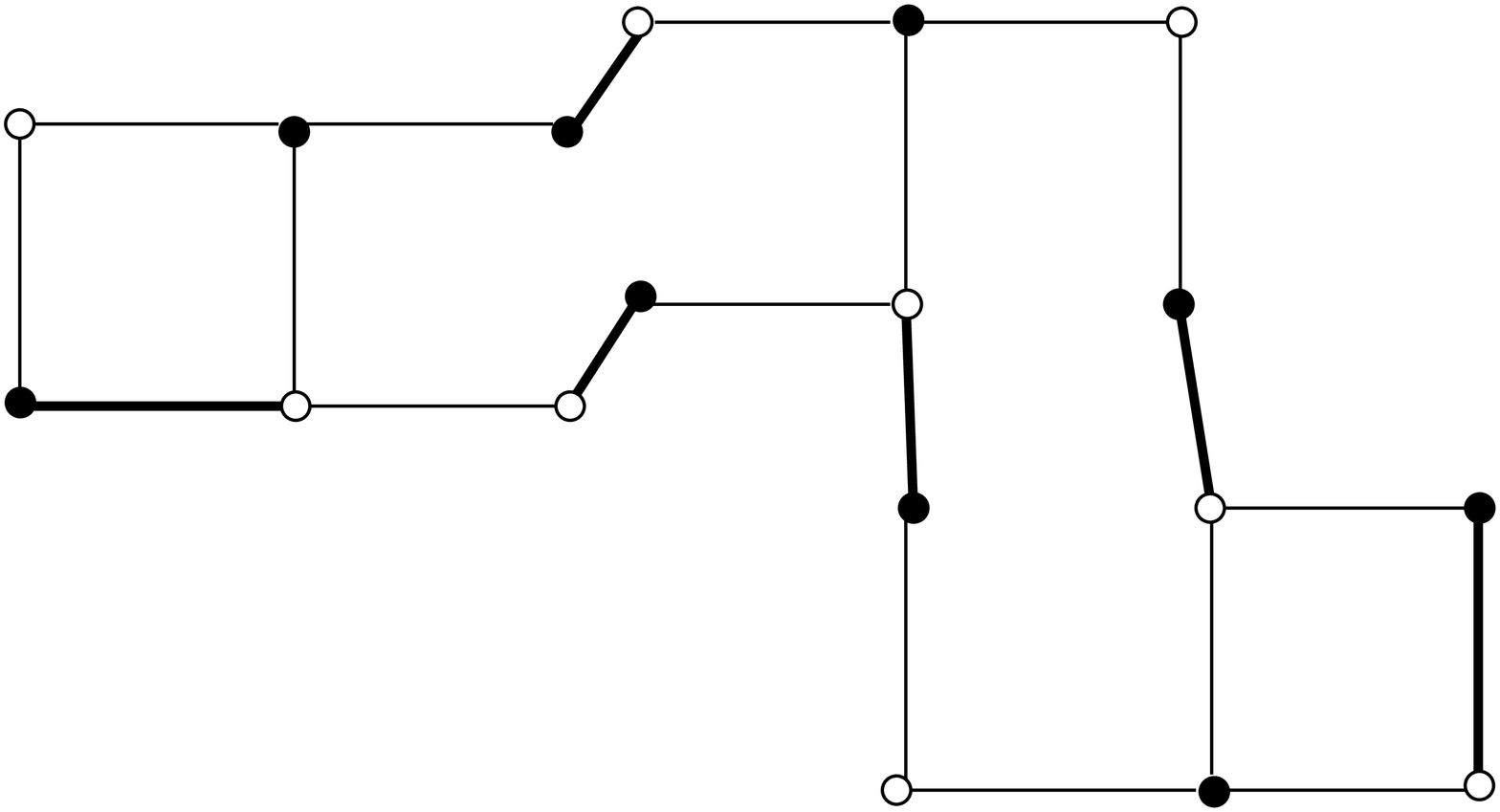}}
\caption{Unraveling the twisted dimer graph}\label{unravel}
\end{figure}

We then find a spanning tree in the graph $\Gamma'''$ indicated below by the solid line segments transverse to the edges. We assign a $\epsilon$- weighting of $+1$ to all of these edges. This is shown in Figure \ref{tree}.
\begin{figure}[ht]
\includegraphics[width=2.4in]{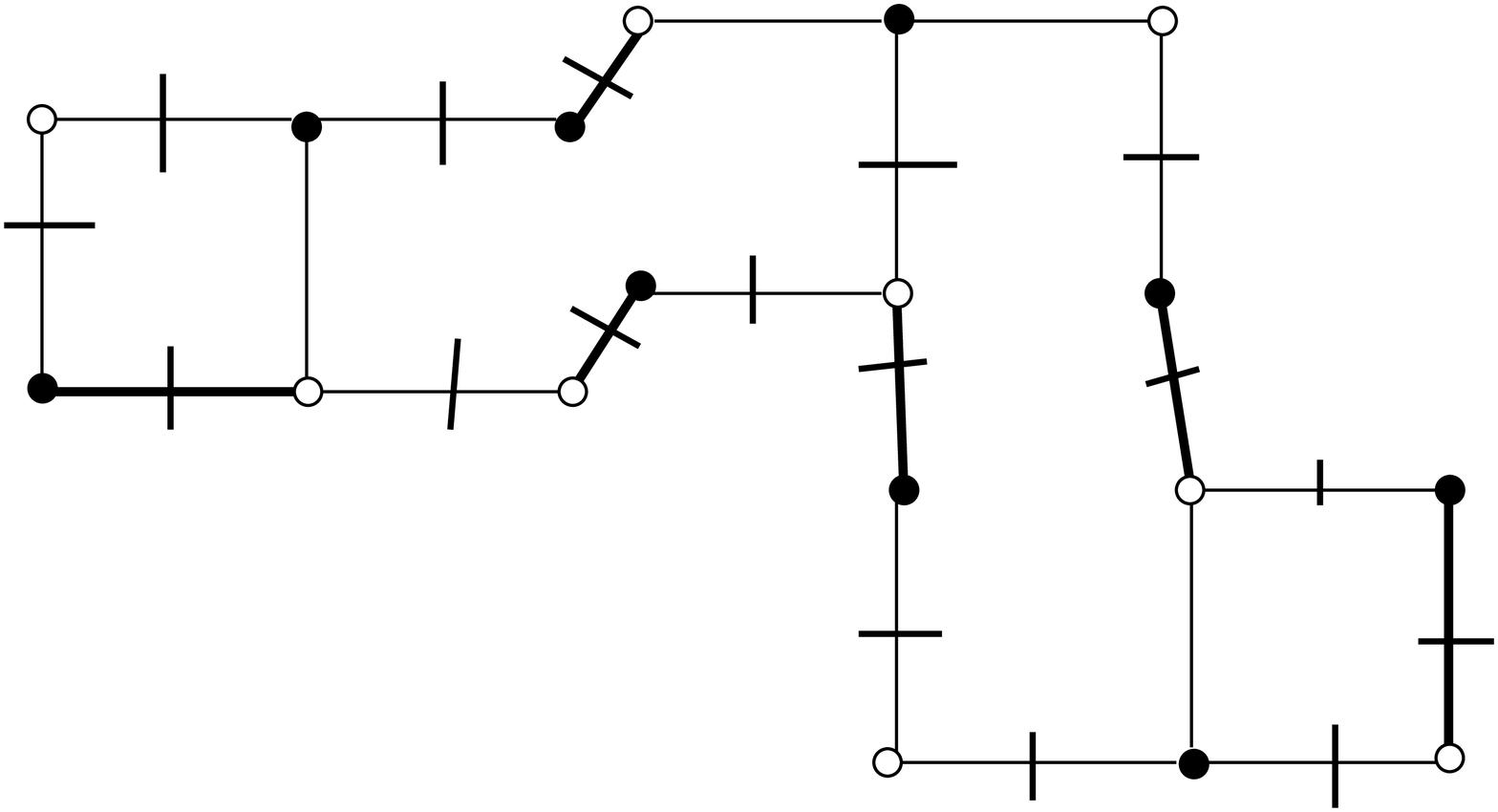}
\caption{Finding a spanning tree}\label{tree}
\end{figure}

Finally, as is shown in Figure \ref{comkas}, we complete $\epsilon$ to a Kasteleyn weighting. In the picture $-1$ weights are indicated by double line segments.
\begin{figure}[ht]
\includegraphics[width=2.4in]{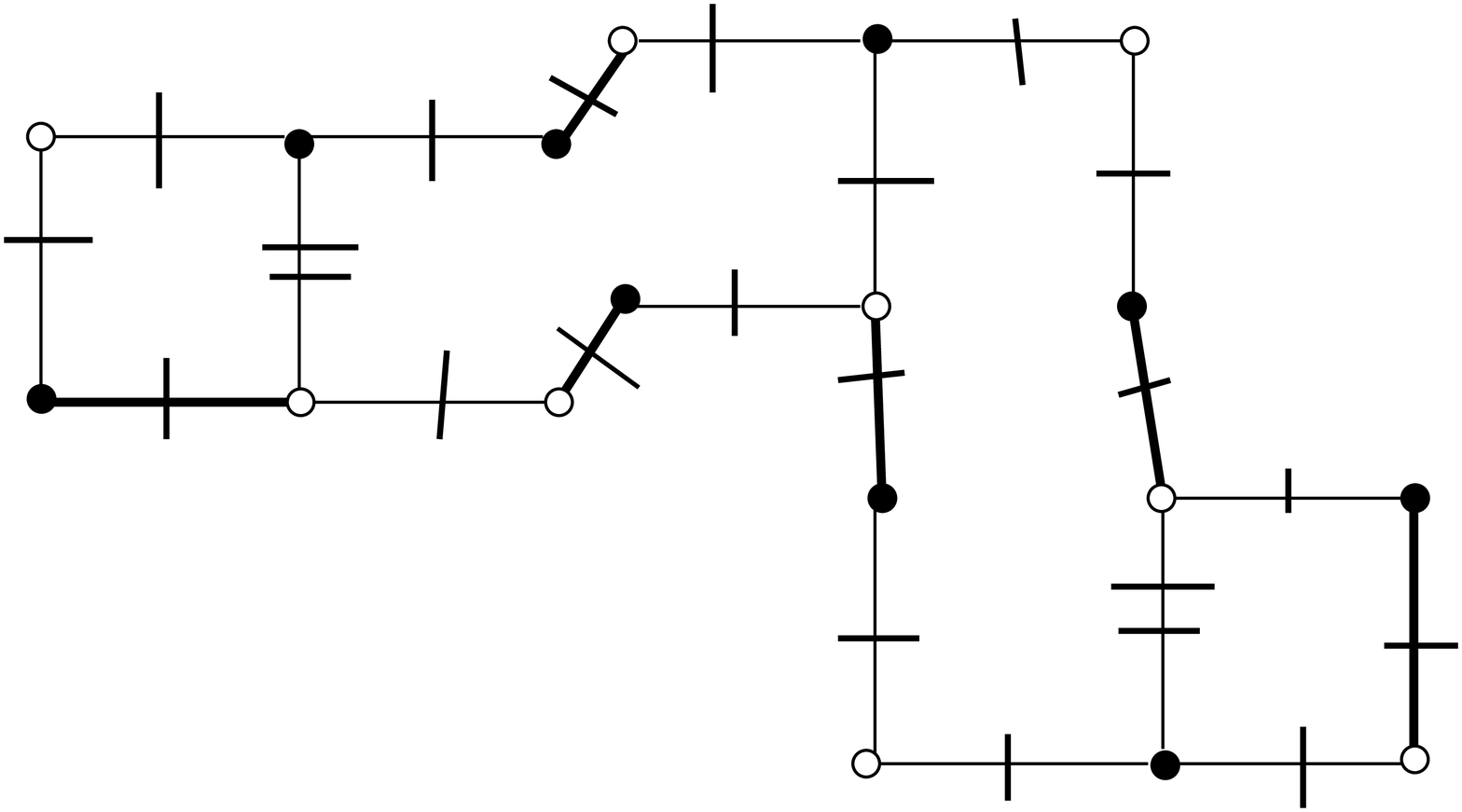}
\caption{A Kasteleyn weighting}\label{comkas}
\end{figure}

Calculating the partition function one can see that the polynomial obtained is $$\Delta_{\textup{Tref}, \rho}(t) = t^6-t^5-t^4+2t^3-t^2-t+1.$$  which agrees up to multiplication by $-1$ with Example \ref{trefex}.

\end{example}

\bibliography{Dimers}

\providecommand{\bysame}{\leavevmode\hbox to3em{\hrulefill}\thinspace}
\providecommand{\MR}{\relax\ifhmode\unskip\space\fi MR }
% \MRhref is called by the amsart/book/proc definition of \MR.
\providecommand{\MRhref}[2]{%
  \href{http://www.ams.org/mathscinet-getitem?mr=#1}{#2}
}
\providecommand{\href}[2]{#2}
\begin{thebibliography}{Wad94}

\bibitem[CF77]{CrowellFox:KnotTheory}
Richard~H. Crowell and Ralph~H. Fox, \emph{{Introduction to knot theory}},
  Springer Verlag, New York, Heidelberg, 1977.

\bibitem[DL10]{DasbachLowrance:KnotSignature}
Oliver~T. Dasbach and Adam Lowrance, \emph{{Turaev genus, knot signature, and
  the knot homology concordance invariants}}, arXiv (2010).

\bibitem[Kas63]{Kasteleyn:DimerModel}
P.~W. Kasteleyn, \emph{{Dimer Statistics and Phase Transitions}}, Journal of
  Mathematical Physics \textbf{4} (1963), no.~2, 287--293.

\bibitem[Kau87]{Kauffman:OnKnots}
Louis~H. Kauffman, \emph{{On Knots}}, Princeton University Press, 1987.

\bibitem[Ken97]{Kenyon:LocalStatistics}
Richard Kenyon, \emph{{Local statistics of lattice dimers}}, Annales de
  l'Institut Henri Poincare (B) Probability and Statistics \textbf{33} (1997),
  no.~5, 591--618.

\bibitem[Ken09]{Kenyon:LecturesOnDimers}
\bysame, \emph{{Lectures on Dimers}}, October 2009.

\bibitem[KL99]{KirkLivingston:TwistedAlexander}
Paul Kirk and Charles Livingston, \emph{{Twisted Alexander Invariants,
  Reidemeister Torsion, and Casson-–Gordon Invariants}}, Topology \textbf{38}
  (1999), no.~3, 635--661.

\bibitem[Kup94]{Kuperberg:Symmetries}
Greg Kuperberg, \emph{{Symmetries of plane partitions and the
  permanent-determinant method}}, J. Comb. Theory Ser A. \textbf{68} (1994),
  no.~1, 115----151.

\bibitem[Kup98]{Kuperberg:PermanentDeterminant}
\bysame, \emph{{An exploration of the permanent-determinant method}}, The
  Electronic Journal of Combinatorics \textbf{5} (1998), 707--740.

\bibitem[Kup02]{Kuperberg:KasteleynCokernels}
\bysame, \emph{{Kasteleyn cokernels}}, 2002, pp.~1--30.

\bibitem[Lin01]{Lin:TwistedAlexander}
Xiao-Song Lin, \emph{{Representations of Knot Groups and Twisted Alexander
  Polynomials}}, Acta Mathematica Sinica, English Series \textbf{17} (2001),
  no.~3, 361--380.

\bibitem[Low08]{Lowrance:KnotFloerWidth}
Adam Lowrance, \emph{{On knot Floer width and Turaev genus}}, Algebraic \&
  Geometric Topology \textbf{8} (2008), no.~2, 1141--1162.

\bibitem[LP09]{LovaszPlummer:MatchingTheory}
Laszlo Lovasz and Michael~D. Plummer, \emph{{Matching Theory (AMS Chelsea
  Publishing)}}, American Mathematical Society, 2009.

\bibitem[OS03]{OzsvathSzabo:Alternating}
Peter Ozsv\'{a}th and Zolt\'{a}n Szab\'{o}, \emph{{Heegaard Floer homology and
  alternating knots}}, Geometry \& Topology \textbf{7} (2003), no.~March,
  225--254.

\bibitem[TF61]{TemperleyFisher:Dimers}
H.~N.~V. Temperley and Michael Fisher, \emph{{Dimer problem in statistical
  mechanics-an exact result}}, Philosophical Magazine \textbf{6} (1961),
  no.~68, 1061--1063.

\bibitem[Wad94]{Wada:TwistedAlexander}
Masaaki Wada, \emph{{Twisted Alexander polynomial for finitely presentable
  groups}}, Topology \textbf{33} (1994), no.~2, 241--256.

\end{thebibliography}
\bibliographystyle {amsalpha}

\end{document}